\newtheorem{thm}{Theorem}[section]
\newtheorem{prop}[thm]{Proposition}
\newtheorem{cor}[thm]{Corollary}
\newtheorem*{theorem*}{Theorem}
\theoremstyle{definition}
\newtheorem{defn}[thm]{Definition}
\theoremstyle{remark}
\newtheorem{rem}[thm]{Remark}
\newcommand{\Z}{\mathbb Z}
\newcommand{\C}{\mathbb C}
\DeclareMathOperator{\supp}{supp}
\DeclareMathOperator{\Rre}{Re}
\DeclareMathOperator{\Ran}{Range}
\newcommand{\bd}{\textrm{b}}
\newcommand{\p}{\partial}
\newcommand{\z}{\bar z}
\newcommand{\dbar}{\bar\partial}
\newcommand{\dbars}{\bar\partial^*}
\newcommand{\dbarst}{\bar\partial^*_t}
\newcommand{\dbarb}{\bar\partial_b}
\newcommand{\dbarbs}{\bar\partial^*_b}
\newcommand{\dbarbst}{\bar\partial^*_{b,t}}
\newcommand{\Boxb}{\Box_b}
\newcommand{\vp}{\varphi}
\newcommand{\nn}{\nonumber}
\newcommand{\ep}{\epsilon}
\newcommand{\I}{\mathcal{I}}
\newcommand{\norm}{|\hspace{-1pt}\|}
\DeclareMathOperator{\Tr}{Tr}
\newcommand{\la}{\langle}
\newcommand{\ra}{\rangle}
\renewcommand{\H}{\mathcal H}
\newcommand{\abs}[1]{\left\vert#1\right\vert}
\begin{document}

\author{Phillip S.\ Harrington, Marco M.\ Peloso and Andrew S.\ Raich}

\thanks{The first author is partially supported by NSF grant DMS-1002332 and
the third author is partially supported by NSF grant DMS-0855822}
\thanks{This paper was written while the second author was visiting
  the University of Arkansas. He wishes to thank this institution for
  its hospitality and and for providing a very pleasant working environment.}

\address{Department of Mathematical Sciences, SCEN 301, 1 University
  of Arkansas, Fayetteville, AR 72701}
\email{psharrin@uark.edu \\  araich@uark.edu}

\address{(permanent address) Departimento di Matematica, Via
  C. Saldini 50, 20133 Milano, Italy}
\email{marco.peloso@unimi.it}

\subjclass[2010]{32W10, 32W05, 35N15, 32V20, 32Q28}

\keywords{$\dbarb$, close range, Kohn's weighted theory, CR-manifold,
  hypersurface type, tangential Cauchy-Riemann operator, $Y(q)$, weak
  $Y(q)$, $Z(q)$, weak $Z(q)$,
complex Green operator, $\dbar$-Neumann operator, Stein manifolds}

\title{Regularity equivalence of the Szeg\"o projection and the complex Green operator}
\maketitle

\begin{abstract}
In this paper we prove that on a CR manifold of hypersurface
type
that satisfies the  weak $Y(q)$ condition,
the complex Green operator $G_q$ is exactly (globally) regular if and
only if the Szeg\"o projections $S_{q-1}, S_q$ and a third orthogonal projection $S'_{q+1}$ are
exactly (globally) regular.  The projection  $S'_{q+1}$ is closely
related to the Szeg\"o projection $S_{q+1}$ and actually coincides
with it if the space of harmonic $(0,q+1)$-forms is trivial.

This result extends the important and by now classical result by
H. Boas and E. Straube on the equivalence of the regularity of the
$\dbar$-Neumann operator and the Bergman projections on a smoothly bounded
pseudoconvex domain.

We also prove an extension of this result to the case of bounded
smooth domains satisfying the weak $Z(q)$
condition on a Stein manifold.
\end{abstract}

\section*{Introduction}
The goal of this article is to discuss the general principle that the
combination of an appropriate weighted theory,  a Hodge decomposition,
and the $L^2$ regularity of $\dbarb$ (resp., $\dbar$) provides the
tools
to prove the equivalence of regularity in the Sobolev scale between
the complex Green operator (resp., the $\dbar$-Neumann operator) and
the Szeg\"o projection (resp., the Bergman projection).

H. Boas and E. Straube first observed the equivalence
of the regularity of the Bergman projection and the $\dbar$-Neumann
operator on smooth, bounded pseudoconvex domains in $\C^n$.
In \cite{BoSt90} they proved the following theorem.
\begin{theorem*} {\rm {\bf (Boas and Straube)}}
Let $\Omega$ be a  smooth, bounded pseudoconvex domain in $\C^n$.  Let
$1\le q\le n$.  Then the $\dbar$-Neumann operator $N_q$ on $(0,q)$-forms is
exactly regular if and only if the three Bergman projections
$P_{q-1},\, P_q$, and $P_{q+1}$ are exactly regular.

The corresponding statement holds with the words ``exactly regular''
replaced by the words ``globally regular''.
\end{theorem*}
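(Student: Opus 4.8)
The plan is to reduce the theorem, in both directions, to statements about the canonical solution operators of $\dbar$, using three tools: the Hodge--Kohn decomposition of $N_q$; the elementary identities expressing the Bergman projections through the $\dbar$-Neumann operators; and Kohn's weighted theory, coupled with a transformation formula linking the unweighted canonical solution operators to the weighted ones through the Bergman projections. Throughout I suppress the form--degree subscripts on $\dbar$ and $\dbars$.

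First I would record the standard algebra. On a smooth bounded pseudoconvex domain in $\C^n$ the harmonic space is trivial in degrees $1,\dots,n$, so that $\overline{\range\dbar}=\ker\dbar$ there, the canonical solution operator for $\dbar$ is $\dbars N_{j+1}$, and one has
\[
I-P_j=\dbars\dbar N_j,\qquad I-P_{j-1}=\dbars N_j\dbar,\qquad P_{j+1}=\dbar N_j\dbars,
\]
together with the Hodge--Kohn identity $N_q=(\dbars N_{q+1})(\dbars N_{q+1})^*+(\dbar N_{q-1})(\dbar N_{q-1})^*$. It is a by--now--standard part of the $L^2$--Sobolev theory (via elliptic regularization and the associated a priori estimates) that $N_j$ is exactly regular if and only if its ``derived'' operators $\dbars\dbar N_j$, $\dbars N_j\dbar$, $\dbar N_j\dbars$ and the canonical solution operators are exactly regular. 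Granting this, the implication ``$N_q$ exactly regular $\Rightarrow P_{q-1},P_q,P_{q+1}$ exactly regular'' is immediate, since $I-P_q=\dbars\dbar N_q$, $I-P_{q-1}=\dbars N_q\dbar$, and $P_{q+1}=\dbar N_q\dbars$ all express the relevant projection directly through $N_q$.

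For the converse — the substantial direction — I would use the transformation formula. Fix the weight $e^{-t|z|^2}$ and write $N_{j,t},\dbarst$ for the weighted objects. Since the unweighted and weighted minimal solutions of $\dbar u=f$ (for $f$ in the relevant $\ker\dbar$) differ by an element of $\ker\dbar=\range P_{j+1}$, and since $\dbars N_{j+1}$ annihilates $(\ker\dbar)^\perp$, one gets for \emph{every} $t>0$ the operator identity
\[
\dbars N_{j+1}=(I-P_j)\,\dbarst N_{j+1,t}\,P_{j+1},
\]
and the analogous identity with $j+1$ replaced by $j$. Kohn's weighted theory supplies, for each Sobolev index $s$, a threshold $t_s$ such that for $t\ge t_s$ the weighted canonical solution operator $\dbarst N_{j+1,t}$ is bounded on $W^s$ \emph{with no loss of derivatives} — the point being that in the weighted estimate the weight absorbs the bad commutator terms, leaving $\dbar$ and $\dbarst$ on the controlled side. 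Taking $t=t_s$, the right--hand side of the transformation formula is a composition of operators bounded on $W^s$ — the two Bergman projections by hypothesis, the weighted operator by Kohn — so for $j=q$ and $j=q-1$ we obtain the exact regularity of $\dbars N_{q+1}$ and of $\dbars N_q=(\dbar N_{q-1})^*$, hence (with adjoints) of all four operators in the Hodge--Kohn decomposition, and therefore of $N_q$. Note that the $j=q$ formula uses $P_q,P_{q+1}$ and the $j=q-1$ formula uses $P_{q-1},P_q$: precisely the three projections in the statement.

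The hard part will be the weighted Sobolev estimate for $\dbarst N_{j+1,t}$: one must verify that $t|z|^2$ defeats the commutator terms arising from differentiating the weighted basic estimate well enough that the estimate for the canonical solution operator holds with no loss for $t$ large depending only on $s$ — the technical heart of Kohn's method, and the place where pseudoconvexity is essential. Finally, the ``globally regular'' version is obtained by running the same argument with $C^\infty(\overline{\Omega})$ in place of $W^s$: the transformation formula holds for every $t$, Kohn's weighted operators are globally regular, and the Bergman projections preserve $C^\infty(\overline{\Omega})$ by hypothesis, so $N_q$ does too; the converse again follows from the identities of the second paragraph.
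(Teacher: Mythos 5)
Your overall architecture---Kohn's formulas for the projections, the Hodge decomposition of $N_q$, and the weighted transformation formulas relating the unweighted and weighted canonical solution operators through the Bergman projections---is precisely the Boas--Straube strategy, and it is the strategy this paper follows for its CR analogue in the proof of Theorem \ref{thm:regularity equivalence}. The converse direction of your argument is essentially right. The forward direction, however, contains a genuine gap: you call it ``immediate'' on the strength of a purported standard fact that exact regularity of $N_j$ yields exact regularity of $\dbars\dbar N_j$, $\dbars N_j\dbar$, and $\dbar N_j\dbars$. The a priori estimate that elliptic regularization actually delivers (\cite[Lemma 3.2]{Str10}, the analogue of Proposition \ref{prop:G_q dominates} here) controls only the operators with $N_j$ \emph{rightmost}: $\dbar N_j$, $\dbars N_j$, $\dbar\dbars N_j$, $\dbars\dbar N_j$. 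It says nothing about $\dbars N_q\dbar$ or $\dbar N_q\dbars$, where a first-order operator sits to the right of $N_q$ and naively costs a derivative (e.g.\ it gives only $\|\dbar N_q\dbars w\|_{W^s}\lesssim\|\dbars w\|_{W^s}\lesssim\|w\|_{W^{s+1}}$). Regularity of $P_{q-1}=I-\dbars N_q\dbar$ and $P_{q+1}=\dbar N_q\dbars$ from that of $N_q$ is part of the \emph{content} of the theorem, and its proof needs the same weighted machinery as the converse: from $P_{q-1}=P_{q-1}\,e^{-t|z|^2}P_{q-1,t}\,e^{t|z|^2}$ and $P_{q-1}=I-\dbars N_q\dbar$ one gets
\[
P_{q-1}h=e^{-t|z|^2}P_{q-1,t}(e^{t|z|^2}h)-\dbars N_q\bigl(\dbar(e^{-t|z|^2})\wedge P_{q-1,t}(e^{t|z|^2}h)\bigr),
\]
where the factor preceding $\dbars N_q$ is of order zero, so the derived-operators lemma applies; similarly $P_{q+1}=P_{q+1,t}+\dbar N_q(\dbars-\dbarst)(I-P_{q+1,t})$ with $\dbars-\dbarst$ of order zero. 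These are exactly the identities \eqref{eqn:S_q-1 in terms of S_q-1,t} and \eqref{eqn:S_q+1 in terms of weighted} in the paper; only $P_q=I-\dbars\dbar N_q$ admits the ``immediate'' treatment you describe.

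Two smaller points. In the converse you pass to ``adjoints'' of $\dbars N_{q+1}$ and $\dbars N_q$; Sobolev boundedness is not preserved by $L^2$-adjoints in general, so you must take the adjoint of the transformation formula itself (which works because the unweighted adjoint of a weighted operator $T_t$ is $e^{-t|z|^2}T_t^{*}e^{t|z|^2}$, and multiplication by $e^{\pm t|z|^2}$ is harmless---compare the computation of $(\dbarbs G_q)^*$ and $(G_q\dbarbs)^*$ via $F_t$ in the paper). And the globally regular case does not follow by ``replacing $W^s$ with $C^\infty$'': for fixed $t$ the weighted operators are exactly regular only up to a Sobolev level depending on $t$, so $\dbarst N_{q+1,t}P_{q+1}f$ lies merely in $W^s$ for $t\ge t_s$, and the hypothesis that $P_q$ preserves $C^\infty$ does not allow you to apply $I-P_q$ to it. The standard repair is the closed graph theorem: global regularity of $P_q$ gives $P_q:W^{m(s)}\to W^s$ for some $m(s)$, and one then chooses $t\ge t_{m(s)}$.
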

Recall that an operator is exactly regular if it preserves all $L^2$ Sobolev spaces and is globally regular it preserves $C^\infty$ functions (or forms).

In this paper in particular we address the question of whether such a
theorem has a counterpart in the case of the Szeg\"o projection and
the complex Green operator, (see Sections \ref{sec:notation} and
\ref{sec:results}
for precise definitions).  One of the main results of this paper
contains the following theorem as a special case.
\begin{theorem*}
Let $\Omega$ be a  smooth, bounded pseudoconvex domain in $\C^n$ and
let $M$ denote its boundary.  Let $G_q$ denote the complex Green
operator and $S_q$ the Szeg\"o projection on $(0,q)$-forms on $M$,
$1\le q\le n-2$.  Then the operator $G_q$  is
exactly regular if and only if the three Szeg\"o  projections
$S_{q-1},\, S_q$, and $S_{q+1}$ are exactly regular.

The corresponding statement holds with the words ``exactly regular''
replaced by the words ``globally regular''.
\end{theorem*}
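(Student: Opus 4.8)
The plan is to transfer the Boas--Straube mechanism to the tangential Cauchy--Riemann complex on $M=\partial\Omega$; as signalled in the introduction it rests on three ingredients: Kohn's weighted $\dbarb$-theory, a Hodge decomposition, and the closed range of $\dbarb$ in $L^{2}$. Since $\Omega$ is smooth, bounded and pseudoconvex, $M$ is a compact CR manifold of hypersurface type with positive semidefinite Levi form; in particular it satisfies weak $Y(q)$ for $1\le q\le n-2$, and $\dbarb$ has closed range on $(0,r)$-forms for every $0\le r\le n-1$. Hence $G_{q-1},G_{q},G_{q+1}$ and the Szeg\"o projections $S_{q-1},S_{q},S_{q+1}$ (onto $\ker\dbarb$) are bounded on $L^{2}$, the Hodge decomposition holds on $(0,q)$-forms, and the commutation identities $\dbarb G_{r}=G_{r+1}\dbarb$, $\dbarbs G_{r+1}=G_{r}\dbarbs$ hold modulo the projections onto the harmonic spaces $\H^{0,r}$. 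From these I would first record the operator identity
\[
G_{q}=\dbarbs G_{q+1}^{2}\dbarb+\dbarb G_{q-1}^{2}\dbarbs=\bigl(\dbarbs G_{q+1}\bigr)\bigl(\dbarbs G_{q+1}\bigr)^{*}+\bigl(\dbarbs G_{q}\bigr)^{*}\bigl(\dbarbs G_{q}\bigr),
\]
the exact analogue of the classical formula for the $\dbar$-Neumann operator, which displays $G_{q}$ as assembled from the canonical solution operators $\dbarbs G_{q}$ and $\dbarbs G_{q+1}$ for $\dbarb$.

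Since exact (respectively global) regularity is preserved under adjoints and compositions in the Sobolev scale, this identity reduces the theorem to the statement that $\dbarbs G_{q}$ and $\dbarbs G_{q+1}$ are exactly (globally) regular if and only if $S_{q-1},S_{q},S_{q+1}$ are. For the ``if'' part I would use the bookkeeping of harmonic forms to write $\dbarbs G_{q+1}=(I-S_{q})\,\mathcal T_{q+1}\,S'_{q+1}$ and $\dbarbs G_{q}=(I-S_{q-1})\,\mathcal T_{q}\,S_{q}$, where $\mathcal T_{r}$ is any bounded solution operator for $\dbarb$ on $(0,r)$-forms and $S'_{q+1}$ is the orthogonal projection onto $\overline{\range\dbarb}$ at level $q+1$. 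Kohn's weighted theory then supplies, for each Sobolev order $s$, a weight $e^{-t\vp}$ — on a pseudoconvex boundary one may take $\vp$ to be the restriction to $M$ of $|z|^{2}$, so that the relevant tangential quadratic form is positive — and a threshold $t=t(s)$ for which the corresponding weighted canonical solution operator $\mathcal T_{r,t}$ (which is bounded on $L^{2}(M)$, the weighted and unweighted norms being equivalent on the compact $M$) is bounded on $W^{s}$. Inserting $\mathcal T_{r}=\mathcal T_{r,t}$ shows that regularity of $S_{q-1},S_{q},S'_{q+1}$ forces regularity of $\dbarbs G_{q},\dbarbs G_{q+1}$, hence of $G_{q}$.

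The converse implication, together with the identification $S'_{q+1}=S_{q+1}$, is where the work lies. Regularity of $S_{q}$ cannot be read off from that of $G_{q}$ by counting derivatives, since $S_{q}-P_{\H^{0,q}}=\dbarb\dbarbs G_{q}$ loses two derivatives; instead I would pass through the weighted theory a second time, using that the weighted and unweighted Szeg\"o projections share the same range $\ker\dbarb$, so $S_{r}S_{r,t}=S_{r,t}$ and $S_{r,t}S_{r}=S_{r}$, and that the weighted operators $G_{r,t},\dbarbst G_{r,t},S_{r,t}$ are themselves exactly (globally) regular for $t=t(s)$; running these comparisons against the identities above should yield a web of equivalences from which regularity of $G_{q}$ returns regularity of $\dbarbs G_{q},\dbarbs G_{q+1}$, and hence of $S_{q-1},S_{q},S'_{q+1}$. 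Finally, $S'_{q+1}$ is replaced by $S_{q+1}$ because on a pseudoconvex boundary $\H^{0,r}(M)=0$ in degrees $1\le r\le n-2$, which gives this at once for $1\le q\le n-3$; $q=n-2$ is the case in which the precise statement retains $S'_{q+1}$. I expect the genuine obstacles to be (i) the weighted $\dbarb$-estimates on $M$ — proving the gain as $t\to\infty$ and controlling the missing characteristic direction, which is precisely where pseudoconvexity enters — and (ii) the comparison of the weighted and unweighted operators through the Szeg\"o projections needed for the converse; the remaining ingredients (the operator identity, stability under adjoints and composition, the commutation relations) are routine.
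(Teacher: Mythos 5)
Your overall architecture is the paper's: the identity $G_q=(\dbarbs G_q)^*(\dbarbs G_q)+(G_q\dbarbs)(G_q\dbarbs)^*$, the ``if'' direction by sandwiching weighted canonical solution operators between unweighted projections (the paper's versions are $\dbarbs G_q=(I-S_{q-1})\dbarbst G_{q,t}S_q+\dbarbs G_qH_{q,t}S_q$ and $G_q\dbarbs=(I-S_q)G_{q,t}\dbarbst S_{q+1}'$, with the harmonic bookkeeping you anticipate), and the reduction of $S_{q+1}'$ to $S_{q+1}$ via vanishing of the harmonic space --- which you handle at least as carefully as the introduction's statement does, correctly isolating $q=n-2$. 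One cosmetic difference: the paper writes everything with $G_q$ alone ($G_q\dbarbs$ rather than $\dbarbs G_{q+1}$) because under weak $Y(q)$ the operators $G_{q\pm1}$ need not exist; in the pseudoconvex special case your commutation relations are available, so this is harmless here.

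The genuine gap is in the converse direction, and it is precisely the step you talk yourself out of. You assert that regularity of $S_q$ ``cannot be read off from that of $G_q$ by counting derivatives'' because $\dbarbs\dbarb G_q$ loses two derivatives, and propose to route around this through the weighted theory alone. But the paper's key lemma for this direction is exactly the non-naive derivative count: Proposition \ref{prop:G_q dominates} (the analogue of \cite[Lemma 3.2]{Str10}) shows by induction and integration by parts, using $\Boxb G_qu=u-H_qu$, that
\[
\|\dbarbs\dbarb G_qu\|_{W^k(M)}+\|\dbarb\dbarbs G_qu\|_{W^k(M)}+\|\dbarb G_qu\|_{W^k(M)}+\|\dbarbs G_qu\|_{W^k(M)}\leq C\bigl(\|G_qu\|_{W^k(M)}+\|u\|_{W^k(M)}\bigr),
\]
i.e.\ no derivatives are lost. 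Without it your scheme is circular: the relation $S_r=S_{r,t}S_r$ expresses $S_r$ in terms of itself, and every comparison formula between weighted and unweighted operators leaves a remainder of the form ($\dbarbs G_q$ or $\dbarb G_q$) composed with an order-zero operator --- concretely, $S_q=I-\dbarbs\dbarb G_q$ needs the displayed estimate directly; $S_{q-1}=F_tS_{q-1,t}F_t^{-1}-\dbarbs G_q[\dbarb,F_t]S_{q-1,t}F_t^{-1}$ needs $W^s$-boundedness of $\dbarbs G_q$; and $S_{q+1}'=S_{q+1,t}'+\dbarb G_q(\dbarbs-\dbarbst)(I-S_{q+1,t}')$ needs $W^s$-boundedness of $\dbarb G_q$. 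All three are supplied only by the integration-by-parts lemma you discarded, so you should reinstate it as the central technical step of the converse; the weighted theory (your obstacle (i)) is needed for the other direction and for $S_{q-1}$, $S_{q+1}'$, but it cannot replace this lemma.
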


Specifically,  in this paper
we  study the cases of the complex Green operator $G_q$ on
embedded CR manifolds of hypersurface type that satisfy the weak $Y(q)$
condition and the $\dbar$-Neumann operator
on domains in a Stein manifold that satisfy the weak $Z(q)$
condition.

The  required  estimates and weighted theory are proven by
the first and third authors in \cite{HaRa11} and \cite{HaRa12},
respectively,
and the results in this article can be thought of as a consequence of
the techniques of \cite{BoSt90} and the estimates in
\cite{HaRa11,HaRa12}.

We write the paper from the point of view of CR manifolds of
hypersurface type and only indicate the changes that are needed
to obtain the results for the $\dbar$-Neumann operator on weakly
$Z(q)$ domains in Stein manifolds. \medskip

Let $M^{2n-1}\subseteq\C^N$ be a $C^\infty$ compact, orientable CR
manifold $N\geq n$. We say that $M$ is of \emph{hypersurface type} if
the CR-dimension of $M$ is $n-1$ so that the complex tangent bundle
of $M$ splits into a complex subbundle and one totally real
direction. The $\dbarb$-complex on $M$ is obtained by restricting the de
Rham complex on $M$ to the conjugate of the complexification of the complex subbundle.

When $M$ is the boundary of a pseudoconvex domain, closed range of
$\dbarb$ on $L^2_{p,q}(M)$ for $0\leq p \leq n$ and $0 \leq q \leq
n-1$
was proved by Shaw and Boas-Shaw \cite{Sha85,BoSh86}. Independently,
Kohn also proved closed range for $\dbarb$ at all form levels and
established the weighted theory
in \cite{Koh86}. Nicoara generalized Kohn's results to the case of CR
manifolds of hypersurface type \cite{Nic06}. Harrington and Raich
further generalized \cite{Nic06} by investigating closed range
and the weighted theory for $\dbarb$ on $(0,q)$-forms for a fixed $q$
(in this case, $p$ is irrelevant and they take $p=0$ for simplicity).
They called their condition weak $Y(q)$ and developed the most general
version of it in \cite{HaRa12}. Condition $Y(q)$ is well known
to be the natural generalization of strict pseudoconvexity for
dealing with $(0,q)$-forms on $M$ for a fixed $q$.
See also \cite{AhBaZa06,Zam08} for conditions related to, but stronger
than, weak $Y(q)$. \medskip

The paper concludes with a discussion of how to adapt the argument for
the $\dbar$-Neumann operator and Bergman projection on a smooth,
bounded domain in a Stein manifold. The argument follows the general
argument for
the complex Green operator and Szeg\"o projection with some minor (and
well-known) modifications. Harrington and Raich \cite{HaRa12} develop
the $L^2$ and weighted Sobolev theory (for $-\frac 12 \leq s\leq 1$)
under the hypotheses that $\Omega\subset M$ is $C^3$, bounded, and
satisfies weak $Z(q)$. In this paper, we discuss the generalization of
the weighted theory for $s\geq 1$ when $\Omega$ is smooth
and bounded. The $L^2$ and weighted $L^2$ theories for $\dbar$ on
pseudoconvex domains in Stein manifolds are now classical and were
established by H\"ormander \cite{Hor65} and Kohn \cite{Koh73}.
\medskip

The outline of the paper is as follows. We set our notation in
Section \ref{sec:notation}, state the main results in Section
\ref{sec:results},  and prove our results in Section
\ref{sec:proofs}. We conclude
with a discussion of Stein manifolds in Section \ref{sec:Stein}.
\medskip

%
%
\section{Notation}\label{sec:notation}

Throughout the paper, we denote by $M$ a smooth, compact, embedded
and orientable CR
manifold of dimension $2n-1$ and hypersurface type.  We refer to
\cite{Bog91} for the theory of CR manifolds.

\subsection{The Levi form and weak $Y(q)$} Let $T^{p,q}(M)$ denote the collection
of $(p,q)$-vectors and $\Lambda^{p,q}(M)$ the set of $(p,q)$-forms on
$M$.
The induced CR-structure has a local orthonormal basis $L_1,\dots,
L_{n-1}$ for the $(1,0)$-vector fields in a neighborhood $U$ of each
point $x\in M$. Let $\omega_1,\dots,\omega_{n-1}$ be the dual basis of
$(1,0)$-forms that satisfy $\langle \omega_j, L_k\rangle
=\delta_{jk}$. Then $\bar L_1,\dots, \bar L_{n-1}$ is a local
orthonormal basis for the $(0,1)$-vector fields with dual basis
$\bar\omega_1,\dots,\bar\omega_{n-1}$
in $U$. Also, the tangent bundle $T(U)$ is spanned by
$L_1,\dots,L_{n-1}$, $\bar L_1,\dots,\bar L_{n-1}$, and an additional
vector field $T$ taken to be
purely imaginary (so $\bar T = -T$).

Since $M$ is orientable, there exists a
global,  purely
imaginary $1$-form $\gamma$   on $M$ that
annihilates $T^{1,0}(M)\oplus T^{0,1}(M)$ and is normalized so that
$\langle \gamma, T \rangle =-1$.

\begin{defn}\label{defn:Levi form}
The \emph{Levi form at a point $x\in M$} is the Hermitian form given
by $-\langle \gamma_x, [L,\bar L']\rangle$ 
where $L, L'\in T^{1,0}_x(U)$, and $U$ is a neighborhood of $x\in M$.
\end{defn}

We remark that $-\langle \gamma_x, [L,\bar L']\rangle = \langle d\gamma, L\wedge\bar L'\rangle$
since $\gamma$ annihilates $T^{1,0}(M)\oplus T^{0,1}(M)$.


Recall that $M$ is \emph{pseudoconvex} if, for some orientation of $\gamma$, the Levi form is positive semi-definite at all
$x\in M$ and \emph{strictly pseudoconvex} if, for some orientation of $\gamma$, the Levi form is positive definite at all
$x\in M$.

When $q$ is fixed, strict pseudoconvexity is not necessary
to prove $1/2$ estimates for the
$\dbar$-Neumann operator. Instead, the optimal condition is $Z(q)$
(see, e.g., \cite{FoKo72,ChSh01}).
$M$ is said to satisfy $Z(q)$, $1\leq q \leq n-1$, if the Levi form
associated with $M$ has at least $n-q$ positive eigenvalues
or at least $q+1$ negative eigenvalues. $M$ is said to satisfy
$Y(q)$, $1\leq q \leq n-1$,
if $M$ satisfies $Z(q)$ and $Z(n-1-q)$. The necessity of the symmetric
requirements for $\dbarb$ at levels $q$ and $n-1-q$
stems from the  duality between $(0,q)$-forms and $(0,n-1-q)$-forms
(see \cite{FoKo72} or \cite{RaSt08} for details).

Our definition of weak $Z(q)$ follows \cite{HaRa12}.
\begin{defn}\label{defn:weak Z(q)}
Let $M\subset \C^n$ be a smooth, compact, orientable CR manifold of
hypersurface type. We say that $M$ satisfies
\emph{weak $Z(q)$}
if there exists a real bivector $\Upsilon\in
T^{1,1}(M)$ that satisfies:
\begin{enumerate}\renewcommand{\labelenumi}{(\roman{enumi})}
 \item $\abs{\omega}^2\geq (i\omega\wedge\bar\omega)(\Upsilon)\geq 0$,
   for all $\omega\in \Lambda^{1,0}(M)$;

 \item $\mu_1+\cdots+\mu_q-d\gamma(\Upsilon)\geq 0$, where
   $\mu_1,\ldots,\mu_{n-1}$  are the eigenvalues of the Levi form
   in increasing order;

 \item $ \inf_{M} \{ |q-\Tr(\Upsilon)|\} >0$.

\end{enumerate}
As above, $M$ satisfies \emph{weak $Y(q)$ at x} if $M$ satisfies weak
$Z(q)$ at $x$ and weak $Z(n-1-q)$ at $x$.
\end{defn}
\begin{rem}
\label{rem:upsilon_matrix}
  In local coordinates, $\Upsilon$ may be identified with an $(n-1)\times(n-1)$ Hermitian matrix $(a_{jk})$ via $\Upsilon=\sum_{j,k}ia_{jk}\bar{L}_k\wedge L_j$.
\end{rem}

%
\subsection{Weak $Z(q)$ and the basic estimate}\label{subsec:weak z(q) and basic estimate}
In this part, we provide motivation and commentary on the weak $Z(q)$ condition.

Let $\Omega\subset\C^n$ be a smooth, bounded domain. Let
$\I_q = \{J = (j_1,\dots,j_q) : 1 \leq j_1 < \cdots < j_q \leq n\}$.
For $f, g \in L^2_{0,q}(\Omega)$, define
\[
\big( f, g \big)_t = \sum_{J\in\I_q} \int_\Omega f_J(z)\overline{g_J(z)} e^{-t|z|^2}\, dV(z)
\]
and $\|f\|_{t,L^2(\Omega)}^2 = \big(f,f\big)_t$. Let $\dbarst$ be the
$L^2$ adjoint of $\dbar$ with respect to the $(\cdot,\cdot)_t$
sesquilinear product. Let $\bd\Omega$ be the boundary of $\Omega$,
$\rho$ a defining function for
$\Omega$ with $|\nabla \rho|=1$ on $\bd\Omega$, and $d\sigma$ be the
induced surface area measure on $\bd\Omega$.
A classical version of the basic identity (or Kohn-Morrey formula) is
\begin{multline}\label{eqn:classic basic iden}
\|\dbar f \|_{t,L^2(\Omega)}^2 + \|\dbarst f\|_{t,L^2(\Omega)}^2  =
\sum_{J\in\I_q}\sum_{j=1}^n\int_\Omega \Big| \frac{\p
  f_J}{\p\z_j}\Big|^2 e^{-t|z|^2}\, dV + qt\|f\|_{t,L^2(\Omega)}^2  \\
+\sum_{K\in\I_{q-1}}\sum_{j,k=1}^n \int_{\bd\Omega}
\frac{\p^2\rho(z)}{\p z_j\p\z_k} u_{jK} \overline{u_{kK}}
e^{-t|z|^2}\, d\sigma(z).
\end{multline}
See \cite[Proposition 2.4]{Str10} for a proof. A closed range estimate
for $\dbar$ follows from this identity if the boundary integral is
positive and $t>0$. If $\Omega$ is pseudoconvex (or at least the sum of any $q$ eigenvalues of the Levi form is nonnegative), then the boundary integral will be positive.

When
$\Omega$ is not pseudoconvex, then (\ref{eqn:classic basic iden}) is
not necessarily a useful equality. For example, if $\Omega$ is an
annular region between two pseudoconvex domains, i.e.,
$\Omega = \Omega_1 \setminus \overline{\Omega_2}$ where
$\Omega_1\supset\Omega_2$ and both domains are pseudoconvex, then near
$\bd\Omega_2$, it is helpful to integrate the
$(\frac{\p f_J}{\p\z_j},\frac{\p f_J}{\p\z_j})_t$ terms by parts. If
we set $L^{t}_j = \frac{\p}{\p z_j} - t\z_j = e^{t|z|^2}\frac{\p}{\p z_j} e^{-t|z|^2}$ and
$\rho_{j\bar k} = \frac{\p^2\rho}{\p  z\p\z_k}$,
\begin{multline}\label{eqn:classic basic iden -- pseudoconcave}
\| \dbar f\|_{t,L^2(\Omega)}^2 + \| \dbars_t f \|_{t,L^2(\Omega)}^2
=  \sum_{J\in \I_q}\sum_{j,k=1}^{n}\big\|L^t_j
f_J\big\|_{t,L^2(\Omega)}^2 - t(n-q) \|f\|_{t,L^2(\Omega)}^2 \\
+\sum_{I\in\I_{q-1}} \sum_{j,k=1}^n \int_{\bd\Omega}\rho_{j\bar k}
f_{jI} \overline{ f_{kI}} e^{-t|z|^2}d\sigma
- \sum_{J\in \I_q}   \int_{\bd\Omega} \Tr(\rho_{j\bar k})  |f_J|^2
e^{-t|z|^2} d\sigma  + O (\| f \|_{t,L^2(\Omega)}^2),
\end{multline}
Equation \eqref{eqn:classic basic iden -- pseudoconcave} works where
$\bd\Omega$ is pseudoconcave since the eigenvalues of the Levi form
are nonpositive. We also need $t<0$ for a closed range estimate.

The $(q-1)$-pseudoconcave property stems from the idea that we do not
have to integrate by parts all of the $(0,1)$ vector fields. For example,
if we arranged the eigenvalues of the Levi form in increasing order
and had a coordinate
system where the $j$th coordinate was associated with the $j$th
eigenvalue of the Levi from (e.g., if the Levi form was
diagonalizable), then an effective identity would be a combination of
(\ref{eqn:classic basic iden}) and (\ref{eqn:classic basic iden --
  pseudoconcave}). Certain $(1,0)$ and $(0,1)$ vector fields appear and
we do not subtract the full trace of the Levi form. In fact, we get a
basic identity of the form
\begin{multline}\label{eqn:classic basic iden -- (q-1)-pseudoconvex}
\| \dbar f\|_{t,L^2(\Omega)}^2 + \| \dbars_t f \|_{t,L^2(\Omega)}^2
\\=  \sum_{J\in \I_q}\sum_{k=m+1}^{n} \Big\|\frac{\partial
  f_J}{\partial\bar z_k}\Big\|_{t,L^2(\Omega)}^2 + \sum_{J\in
  \I_q}\sum_{j=1}^{m} \big\|L_j f_J\big\|_{t,L^2(\Omega)}^2 + t (q-m)
\| f\|_{t,L^2(\Omega)}^2\\
+\int_{\bd\Omega} \Bigg[\sum_{I\in\I_{q-1}} \sum_{j,k=1}^n \rho_{j\bar k}
f_{jI} \overline{f_{kI}} e^{-t|z|^2}d\sigma
- \sum_{j=1}^m  \rho_{j\bar j}  |f|^2 e^{-t|z|^2} \Bigg]
d\sigma  + O (\| f \|_{t,L^2(\Omega)}^2).
\end{multline}
The sign of $t$ depends on whether $m>q$ or $m<q$, and this depends on
how many eigenvalues of the Levi form are negative. The only value
that $m$ is not allowed to take is $m=q$. Zampieri's
$(q-1)$-pseudoconvexity
is a condition that requires a vector bundle of dimension $m$ so that the boundary integral in \eqref{eqn:classic
  basic iden -- (q-1)-pseudoconvex} is always a positive term
and $m<q$. In \cite{HaRa11}, Harrington and Raich permitted the
case $m>q$, which is useful when dealing with annular regions.

In \cite{HaRa12}, Harrington and Raich introduced a matrix $\Upsilon$ (relative to a choice of basis for $T^{p,q}(M)$; see Remark \ref{rem:upsilon_matrix}) that governs the
integration by
parts. In the pseudoconvex convex case, $\Upsilon$ is the $0$ matrix (no
integration by parts needed).
In the pseudoconcave case, $\Upsilon = I$, the identity matrix, since
every $(0,1)$ vector field needs to be integrated by parts. In the
$(q-1)$-pseudoconcave case (or weak $Z(q)$ case with the definition
from
\cite{HaRa11}), $\Upsilon$ is diagonal and has the form
\[
\Upsilon = \begin{pmatrix} I_m & 0 \\ 0 & 0 \end{pmatrix},
\]
where $I_m$ is the $m\times m$ identity matrix. In looking at the
basic identities, \eqref{eqn:classic basic iden}, \eqref{eqn:classic
  basic iden -- pseudoconcave}, and
\eqref{eqn:classic basic iden -- (q-1)-pseudoconvex}, Harrington and
Raich observed three items in trying to form the matrix $\Upsilon$:
\begin{enumerate}[	i.]
\item We need $0 \leq \Upsilon \leq I$ or the sum of the $(0,1)$ and
  $(1,0)$ vector fields may not be positive.

\item $\Upsilon$ must be chosen so that the boundary integral is positive.

\item $\Upsilon$ cannot cause the $L^2$ norm of $f$ that is multiplied
  by $t$ to vanish. This is the $t(q-m)\|f\|_{t,L^2(\Omega)}^2$ term in
  \eqref{eqn:classic basic iden -- (q-1)-pseudoconvex}.
\end{enumerate}
Given the requirements on $\Upsilon = (\Upsilon^{j\bar k})$, they
formulated the weak $Z(q)$ condition for domains in a Stein manifold. In the case of an
embedded CR manifold of hypersurface type, this definition becomes
Definition \ref{defn:weak Z(q)}. The basic identity for a smooth,
bounded pseudoconvex domains $\Omega\subset\C^n$ is then
\begin{align*}
&\| \dbar f\|_{t,L^2(\Omega)}^2 + \| \dbars_t f \|_{t,L^2(\Omega)}^2
=  \sum_{J\in \I_q}\sum_{j,k=1}^{n}\left((\delta_{jk}-\Upsilon^{\bar
    kj})\frac{\partial f_J}{\partial\bar z_k},\frac{\partial
    f_J}{\partial\bar z_j}\right)_t
+\sum_{J\in \I_q}\sum_{j,k=1}^{n}\left(\Upsilon^{\bar kj}L^t_j
  f_J,L^t_k  f_J\right)_t \\
&+\sum_{I\in\I_{q-1}} \sum_{j,k=1}^n \int_{\bd\Omega}\rho_{j\bar k}
f_{jI} \overline{ f_{kI}} e^{-t|z|^2}d\sigma
- \sum_{j,k=1}^n   \int_{b\Omega} \Upsilon^{\bar kj}\rho_{j\bar k}
|f|^2 e^{-t|z|^2} d\sigma \\
&+ 2\Rre\Bigg\{  \sum_{J\in
  \I_q}\sum_{j,k,\ell=1}^{n}\left(\frac{\partial\Upsilon^{\bar
      kj}}{\p\z_k}\Upsilon^{\bar j\ell}
  L^t_\ell  f_J,f_J\right)_t
  -\sum_{J\in \I_q}\sum_{j,k,\ell=1}^{n} 				
   \left(\frac{\partial\Upsilon^{\bar kj}}{\partial z_j}(\delta_{k
       \ell}-\Upsilon^{\bar\ell k})\frac{\partial f_J}{\partial\bar
       z_\ell},f_J\right)_t \Bigg\}\\
&+ \sum_{J\in\I_q} t \big((q-\Tr(\Upsilon)) f_J, f_J\big)_t
+ O (\| f \|_{t,L^2(\Omega)}^2),
\end{align*}
where $O(\|f\|_{t,L^2(\Omega)}^2)
\leq C(\|\Upsilon\|_{C^2(\bar\Omega)}+ \| \Upsilon\|_{C^2(\bar\Omega)}^2)
\| f \|_{t,L^2(\Omega)}$.
This identity includes \eqref{eqn:classic basic iden},
\eqref{eqn:classic basic iden -- pseudoconcave}, and
\eqref{eqn:classic basic iden -- (q-1)-pseudoconvex} as special cases,
as discussed above.

\subsection{Norms}
We follow the notation from \cite[Section 3]{HaRa11}. We set
\[
(\vp,\phi)_t = \int_M \phi\bar\vp e^{-t|z|^2}\, d\sigma.
\]
In particular, $t=0$ is the standard, unweighted $L^2$ inner product
and has norm $\|\vp\|_{L^2(M)}^2 = (\vp,\vp)_0$.

We follow the setup for the microlocal analysis in \cite{Rai10,HaRa11}.
Since $M$ is compact, there exists a finite cover $\{ U_\nu\}_{\nu}$
so each $U_{\nu}$ has a special boundary system and
can be parameterized by a hypersurface in $\C^n$ ($U_\nu$ may be
shrunk as necessary). To set up the microlocal analysis,
we need to define the appropriate pseudodifferential operators on each
$U_{\nu}$. Let
$\xi = (\xi_1,\dots,\xi_{2n-2},\xi_{2n-1}) = (\xi',\xi_{2n-1})$ be the
coordinates in Fourier space so that
$\xi'$ is dual to the part of $T(M)$ in the maximal complex subspace
(i.e., $T^{1,0}(M)\oplus T^{0,1}(M)$) and
$\xi_{2n-1}$ is dual to the totally real part of $T(M)$, i.e.,the
``bad" direction $T$. Define
\begin{align*}
\mathcal{C}^+
&= \{ \xi : \xi_{2n-1} \geq \frac 12 |\xi'| \text{ and } |\xi|\geq1\};\\
\mathcal{C}^-
&= \{\xi : -\xi\in \mathcal{C}^+\};\\
\mathcal{C}^0
&= \{\xi : -\frac 34|\xi'| \leq \xi_{2n-1}\leq \frac 34 |\xi'|\} \cup \{\xi : |\xi|\leq 1\}.
\end{align*}
Note that $\mathcal{C}^+$ and $\mathcal{C}^-$ are disjoint, but both
intersect $\mathcal{C}^0$ nontrivially. Next, we define smooth
functions on
$\{|\xi| : |\xi|^2 =1\}$. Let
\begin{align*}
\psi^+(\xi)
&= 1 \text{ when } \xi_{2n-1}\geq \frac 34|\xi'| \text{ and } \supp
\psi^+ \subset \{\xi : \xi_{2n-1}\geq \frac 12|\xi'|\}; \\
\psi^-(\xi)
&= \psi^+(-\xi); \\
\psi^0(\xi)
&\text{ satisfies } \psi^0(\xi)^2 = 1- \psi^+(\xi)^2 -
\psi^-(\xi)^2.
\end{align*}
Extend $\psi^+$, $\psi^-$, and $\psi^0$ homogeneously outside of the
unit ball, i.e., if $|\xi|\geq 1$, then
\[
\psi^+(\xi) = \psi^+(\xi/|\xi|),\ \psi^-(\xi) =
\psi^-(\xi/|\xi|),\text{ and } \psi^0(\xi) = \psi^0(\xi/|\xi|).
\]
Also, extend $\psi^+$, $\psi^-$, and $\psi^0$ smoothly inside the unit
ball so that $(\psi^+)^2+(\psi^-)^2 + (\psi^0)^2 =1$. Finally,
there exists a large constant $A>0$ that depends on $M$ (which allows
the weighted Sobolev theory to hold and whose existence is proven in
\cite{Rai10,HaRa11}) when we define,
for any $t>0$,
\[
\psi^+_t(\xi) = \psi^+(\xi/(t A)),\ \psi^-_t(\xi) = \psi^-(\xi/(t
A)),\text{ and }\psi^0_t(\xi) = \psi^0(\xi/(t A)).
\]
Next, let $\Psi^+_{t}$, $\Psi^-_{t}$, and $\Psi^0_t$ be the pseudodifferential operators of order zero with symbols
$\psi^+_t$, $\psi^-_t$, and $\psi^0_t$, respectively. The equality
$(\psi^+_t)^2+(\psi^-_t)^2 + (\psi^0_t)^2 =1$ implies that
\[
(\Psi^+_{t})^*\Psi^+_{t} + (\Psi^0_{t})^*\Psi^0_{t} + (\Psi^-_{t})^*\Psi^-_{t} = Id.
\]
We will also have use for pseudodifferential operators  that
``dominate" a given pseudodifferential operator. Let
$\psi$ be cut-off function and $\tilde\psi$ be another cut-off
function so that $\tilde\psi|_{\supp \psi} \equiv 1$. If $\Psi$ and
$\tilde\Psi$ are pseudodifferential operators with symbols $\psi$ and
$\tilde\psi$, respectively, then we say that
$\tilde\Psi$ dominates $\Psi$.

For each $U_\nu$,  we can define
$\Psi^+_{t}$, $\Psi^-_{t}$, and $\Psi^0_{t}$ to act on functions or
forms supported in $U_\nu$, so let $\Psi^+_{t,\nu}$, $\Psi^-_{t,\nu}$,
and $\Psi^0_{t,\nu}$
be the pseudodifferential operators of order zero defined on $U_\nu$,
and let $\mathcal{C}^+_\nu$, $\mathcal{C}^-_\nu$, and
$\mathcal{C}^0_\nu$ be the regions of
$\xi$-space dual to $U_\nu$ on which the symbol of each of those
pseudodifferential operators is supported. Then it follows that:
\[
(\Psi^+_{t,\nu})^*\Psi^+_{t,\nu} + (\Psi^0_{t,\nu})^*\Psi^0_{t,\nu} +
(\Psi^-_{t,\nu})^*\Psi^-_{t,\nu} = Id.
\]

Let $\{\zeta_\nu\}$ be a partition of unity
subordinate to the covering $\{U_\nu\}$ satisfying $\sum_{\nu}
\zeta_\nu^2=1$. Also, for each $\nu$, let $\tilde\zeta_\nu$ be a
cutoff function
that dominates $\zeta_\nu$ so that $\supp\tilde\zeta_\nu \subset
U_\nu$. We define
\begin{multline*}
\la \phi,\vp\ra_t
= \sum_{\nu} \Big[ \big(\tilde\zeta_\nu \Psi^+_{\nu,t}
\zeta_\nu \phi^\nu, \tilde\zeta_\nu \Psi^+_{\nu,t} \zeta_\nu \vp^\nu\big)_{\lambda_+t}
\\+ \big(\tilde\zeta_\nu \Psi^0_{\nu,t} \zeta_\nu \phi^\nu,
\tilde\zeta_\nu \Psi^0_{\nu,t} \zeta_\nu \vp^\nu\big)_0
+ \big(\tilde\zeta_\nu \Psi^-_{\nu,t} \zeta_\nu \phi^\nu,
\tilde\zeta_\nu \Psi^-_{\nu,t} \zeta_\nu
\vp^\nu\big)_{\lambda_-t}\Big],
\end{multline*}
where
\[
\lambda_+ = \begin{cases} 1 &\text{if }  \Tr\Upsilon <q    \\  -1 &
  \text{if } \Tr\Upsilon >q, \end{cases}
\]
and
\[
\lambda_- = \begin{cases} -1 &\text{if }  \Tr\Upsilon < n-1-q    \\  1
  & \text{if } \Tr\Upsilon > n-1-q .\end{cases}
\]
Set
\[
\norm\vp\norm_t^2 = \la\vp,\vp\ra_t.
\]

Let $\Lambda^s$ be the pseudodifferential operator with symbol
$(1+|\xi|^2)^{s/2}$. We set the Sobolev $s$-norm on $W^s(M)$ to be
\[
\| \vp \|_{W^s(M)}^2 = \sum_\nu \|\tilde\zeta_\nu \Lambda^s \zeta_\nu \vp^\nu \|_{L^2(M)}^2.
\]

It is shown in \cite{Nic06,Rai10} that there exist constants $c_t,C_t>0$ so that
\[
c_t \|\vp\|_{L^2(M)}^2 \leq \norm \vp \norm_t^2 \leq C_t \|\vp\|_{L^2(M)}^2
\]
and an invertible pseudodifferential operator of order 0, $F_t$, so that
\begin{equation}
\label{eq:F_t}
\la\vp,\phi\ra_t = (\vp,F_t\phi)_0.
\end{equation}

\subsection{$L^2$ theory for $\dbarb$}\label{sec:L^2 theory}
In \cite{HaRa11}, Harrington and Raich established Kohn's weighted theory for $\dbarb$. In particular, let $\dbarbst$ be the $L^2$-adjoint of $\dbarb$ in $\la\cdot,\cdot\ra_t$,
$\Box_{b,t} = \dbarb\dbarbst+\dbarbst\dbarb$, $H_{q,t}$ the projection of $L^2_{0,q}(M,e^{-t|z|^2})$ onto $\ker\dbarb\cap\ker\dbarbst$,
and $G_{q,t}$ be the relative inverse to $\Box_{b,t}$, that is, the
inverse on the orthogonal complement
of $\ker\Box_{b,t}$.  When $t=0$, we suppress the subscript.  We also know that $\dbarbs - \dbarbst$ is an operator
of order $0$ from \cite[Lemma 3.7]{Rai10}.

We have the Hodge decomposition
\[
I = \dbarb\dbarbs G_q + \dbarbs\dbarb G_q + H_q
\]
and a similar Hodge decomposition for the weighted operators. Let
$S_q:L^2_{0,q}(M)\to\ker\dbarb$ be the Szeg\"o projection.  Since
$S_q$ is self-adjoint, it follows
that $\ker S_q = (\Ran S_q)^\perp$. It is also easily checked that $S_q\dbarbs=0$, so
\[
S_q =  \dbarb\dbarbs G_q + H_q
\]
and therefore Kohn's formula
\[
S_q = I - \dbarbs\dbarb G_q
\]
holds. Since we do not know that $G_{q-1}$ exists as a continuous
operator on $L^2_{0,q-1}(M)$ (and hence cannot commute  $G_{q-1}$ with
$\dbarb$), we define
\[
S_{q-1} = I - \dbarbs G_q \dbarb.
\]
Then $S_{q-1}$ is a self-adjoint projection and hence is still an
orthogonal projection. We will continue to call $S_{q-1}$ a Szeg\"o
projection because if we had a Hodge theory for
$L^2_{0,q-1}(M)$, then $S_{q-1}$ would agree with the Szeg\"o
projection as defined above. We also set
\[
S_{q+1}' =  \dbarb G_q \dbarbs.
\]
The orthogonal projection $S_{q+1}'$ is not generically the Szeg\"o projection
because it annihilates harmonic forms.

Every formula in this section has a weighted analog.
\medskip

%
%
\section{Statements of the Main Results}\label{sec:results}
In what follows, we reserve $t\geq 0$ for the weight $\lambda_t(z) = e^{-t|z|^2}$ and
$s\geq0$ for Sobolev norms of order $s$ (defined below).

\subsection{CR manifolds of hypersurface type}\label{subsec:CR mfld results}
\begin{thm}\label{thm:regularity equivalence}
Let $M$ be a smooth, compact, embedded, CR manifold of hypersurface
type that satisfies weak $Y(q)$ for some $1 \leq q \leq n-2$. Let
$s\geq 0$. If $G_q$ is a continuous operator
on $W^{s+2}_{0,q}(M)$, then there exists a constant $C_r$ so that, for
every $u\in C^\infty(M)$,
\[
\|S_{q-1} u\|_{W^r(M)} + \|S_qu\|_{W^r(M)} + \|S_{q+1}'u\|_{W^r(M)}
\leq C_r \|G_qu\|_{W^r(M)}
\]
for $0 \leq r \leq s$.

If $S_{q-1}$, $S_q$, and $S_{q+1}'$ are continuous operators on
$W^s_{0,q-1}(M)$, $W^s_{0,q}(M)$, and $W^s_{0,q+1}(M)$, respectively,
then $G_q$ is a continuous operator
on $W^{s}_{0,q}(M)$ and there exists a constant $C_{s}$ so that for
every $u\in C^\infty(M)$,
\[
\|G_qu \|_{W^s(M)} \leq C_s \big(\|S_{q-1}u\|_{W^s(M)} +
\|S_qu\|_{W^s(M)} + \|S_{q+1}'u\|_{W^s(M)} \big).
\]
\end{thm}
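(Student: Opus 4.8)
The plan is to run the Boas–Straube argument in the microlocal/weighted framework of \cite{HaRa11,HaRa12}. The starting point is the collection of exact formulas from Section \ref{sec:L^2 theory}, together with their weighted analogs: $S_q = I - \dbarbs\dbarb G_q$, $S_{q-1} = I - \dbarbs G_q \dbarb$, $S_{q+1}' = \dbarb G_q \dbarbs$, and the Hodge decomposition $I = \dbarb\dbarbs G_q + \dbarbs\dbarb G_q + H_q$. First I would record how the weighted complex Green operator $G_{q,t}$ relates to the unweighted one: because $\la\cdot,\cdot\ra_t = (\cdot, F_t\cdot)_0$ with $F_t$ an invertible order-zero pseudodifferential operator, and because $\dbarbst = F_t^{-1}\dbarbs F_t + (\text{order }0)$ (using that $\dbarbs - \dbarbst$ has order $0$ by \cite[Lemma 3.7]{Rai10}), one expresses $G_{q,t}$, $S_{q,t}$, etc.\ in terms of their unweighted counterparts modulo lower-order (in fact order $\le 0$, gaining in $t$) errors. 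The key quantitative input is that the weighted basic estimate of Subsection \ref{subsec:weak z(q) and basic estimate} gives, for $t$ large, a subelliptic-type gain: on the pieces of a form microlocalized into $\mathcal C^\pm_\nu$ the weighted $\Box_{b,t}$-norm controls $\sqrt t$ times the $L^2$ norm, while on the $\mathcal C^0_\nu$ piece the operator is elliptic, so $G_{q,t}$ maps $W^s$ to $W^s$ with norm $O(1)$ (not gaining derivatives, but with constants independent of $t$ after the microlocal splitting). This is exactly the mechanism that lets one trade a weight for regularity.

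For the forward direction, assume $G_q: W^{s+2}_{0,q}\to W^{s+2}_{0,q}$ is bounded. Using Kohn's formula $S_q = I - \dbarbs\dbarb G_q$ and the fact that $\dbarbs\dbarb$ is a second-order operator, boundedness of $G_q$ on $W^{s+2}$ immediately gives boundedness of $S_q$ on $W^s$; similarly $S_{q-1} = I - \dbarbs G_q\dbarb$ and $S_{q+1}' = \dbarb G_q\dbarbs$ are bounded on $W^s$ (the two outer first-order operators and the $G_q$ in the middle absorb the two lost derivatives, using $G_q\dbarb = \dbarb G_{q-1}$-type identities only where they are legitimate, i.e.\ via the adjoint formulation to avoid needing $G_{q-1}$). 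Then for $0\le r\le s$ the claimed inequality $\|S_{q-1}u\|_{W^r} + \|S_q u\|_{W^r} + \|S_{q+1}'u\|_{W^r} \le C_r\|G_q u\|_{W^r}$ is obtained by writing each Szegő projection applied to $u$ in terms of $G_q$ applied to $\dbarb u$ or $\dbarbs u$ or $u$ and invoking $\dbarb S_q = 0$, $S_q\dbarbs=0$; the point is that on $\Ran S_q$ the form $G_q u$ already encodes $u$ up to $\dbarb$-exact and $\dbarbst$-exact pieces that the other two projections capture. I would interpolate to get all intermediate $r$.

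The substantive direction is the converse, and this is where the weighted theory does the work and where I expect the main obstacle. Assume $S_{q-1}, S_q, S_{q+1}'$ are bounded on the relevant $W^s$ spaces. One shows by induction on $\lceil s\rceil$ (peeling off one derivative at a time, or half a derivative via the microlocal cutoffs) that $G_q$ is bounded on $W^s$. The inductive step: given $u\in C^\infty$, decompose $u = \dbarb\dbarbs G_q u + \dbarbs\dbarb G_q u + H_q u$; the harmonic part is smoothing, and one must estimate $\|\dbarb\dbarbs G_q u\|_{W^s}$ and $\|\dbarbs\dbarb G_q u\|_{W^s}$. Here one passes to the weight $t$: $G_q u = G_{q,t} u + (\text{error expressible via }F_t)$, and $G_{q,t}$ satisfies the weighted elliptic/subelliptic estimate, so commuting a tangential derivative $\Lambda^s$ (or $\tilde\zeta_\nu\Lambda^s\zeta_\nu$) past $\Box_{b,t}$ produces a controlled commutator plus a term that, on the $\mathcal C^\pm$ microlocal sectors, is absorbed by the $t$-gain in the basic estimate once $t$ is chosen large depending on $s$; on the $\mathcal C^0$ sector ellipticity closes the estimate. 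The boundedness of $S_{q\mp 1}, S_q, S_{q+1}'$ enters precisely to control the "bad" boundary/commutator terms that cannot be absorbed — these terms are, after the manipulations, of the form (Szegő projection) composed with a bounded operator applied to $u$, exactly as in \cite{BoSt90}. Finally one removes the weight: since $\norm\cdot\norm_t$ and $W^s$-norms are equivalent for each fixed $t$ (with $t$-dependent constants, which is harmless as $t$ is now fixed by $s$), the weighted $W^s$ bound on $G_{q,t}u$ transfers to an unweighted $W^s$ bound on $G_q u$, using once more that the difference $G_q - G_{q,t}$ is controlled by order-zero pseudodifferential operators. The main technical obstacle is bookkeeping the microlocal commutators $[\Lambda^s, \Psi^\pm_{t,\nu}]$ and $[\Lambda^s,\Box_{b,t}]$ uniformly in $t$ and verifying that every term which is not absorbed genuinely factors through one of the three Szegő projections; this is the heart of the Boas–Straube mechanism and requires care with the three distinct microlocal sectors and the sign conventions for $\lambda_\pm$.
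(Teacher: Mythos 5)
Your forward direction is close in spirit to the paper's but has one real problem and one omission. The claim that boundedness of $G_q$ on $W^{s+2}$ ``immediately'' gives boundedness of $S_q=I-\dbarbs\dbarb G_q$ on $W^s$ because $\dbarbs\dbarb$ is second order only yields $S_q\colon W^{s+2}\to W^{s}$, a loss of two derivatives; the stated estimate $\|S_qu\|_{W^r}\le C_r\|G_qu\|_{W^r}$ requires the a priori bound $\|\dbarbs\dbarb G_qu\|_{W^r}+\|\dbarb\dbarbs G_qu\|_{W^r}+\|\dbarb G_qu\|_{W^r}+\|\dbarbs G_qu\|_{W^r}\lesssim\|G_qu\|_{W^r}+\|u\|_{W^r}$ of Proposition \ref{prop:G_q dominates}, which is proved by a separate integration-by-parts induction that you never invoke. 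Likewise, the paper does not ``absorb the lost derivatives'' with the outer first-order operators for $S_{q-1}$ and $S_{q+1}'$; it derives the identities $S_{q-1}=F_tS_{q-1,t}F_t^{-1}-\dbarbs G_q[\dbarb,F_t]S_{q-1,t}F_t^{-1}$ and $S_{q+1}'=S_{q+1,t}'+\dbarb G_q(\dbarbs-\dbarbst)(I-S_{q+1,t}')$, so that each unweighted projection is a known-regular weighted projection plus a correction in which $G_q$ is composed only with operators of order zero.

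The converse is where the genuine gap lies. Your plan rests on two claims that are unjustified and, as far as I can see, false: that $G_qu=G_{q,t}u+{}$an error ``controlled by order-zero pseudodifferential operators,'' and that after commuting $\Lambda^s$ past $\Box_{b,t}$ the unabsorbable terms ``factor through one of the three Szeg\"o projections.'' There is no direct perturbative relation between $G_q$ and $G_{q,t}$; if there were, regularity of $G_q$ would follow from that of $G_{q,t}$ alone and the hypotheses on $S_{q-1}$, $S_q$, $S_{q+1}'$ would play no role. The actual mechanism, as in Boas--Straube, is purely algebraic: one writes $G_q=G_q\Box_b G_q=(\dbarbs G_q)^*(\dbarbs G_q)+(G_q\dbarbs)(G_q\dbarbs)^*$ and then factors each piece through the assumed-regular unweighted projections and the known-regular weighted operators, namely $\dbarbs G_q=(I-S_{q-1})\dbarbst G_{q,t}S_q+\dbarbs G_qH_{q,t}S_q$ and $G_q\dbarbs=(I-S_q)G_{q,t}\dbarbst S_{q+1}'$, with the adjoints computed by conjugation with $F_t$ and the harmonic term shown to be smoothing. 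Without identities of this kind your induction has no way to bring the hypotheses on the projections to bear; the commutator-absorption scheme you describe is the proof of regularity of the \emph{weighted} operators (Proposition \ref{prop:regularity of G dbarbst}), not a route from the projections to the unweighted $G_q$.
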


\begin{cor}\label{cor:exact regularity}
Let $M$ be a smooth, compact, embedded, CR manifold of hypersurface type that satisfies weak $Y(q)$ for some $1 \leq q \leq n-2$. Then $G_q$ is exactly regular if and only if
$S_{q-1}$, $S_q$, and $S_{q+1}'$ are exactly regular.
\end{cor}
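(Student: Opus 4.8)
The plan is to deduce Corollary~\ref{cor:exact regularity} directly from Theorem~\ref{thm:regularity equivalence}, so essentially no new analysis is required; the content is entirely bookkeeping about how exact regularity unpacks into the scale of Sobolev estimates. Recall that an operator is \emph{exactly regular} if it is bounded on $W^s$ for every $s\geq 0$, so the statement is a biconditional about all nonnegative Sobolev indices at once.

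First I would prove the forward direction. Assume $G_q$ is exactly regular, i.e.\ $G_q$ is a continuous operator on $W^{s+2}_{0,q}(M)$ for every $s\geq 0$. Fix $s\geq 0$. By the first half of Theorem~\ref{thm:regularity equivalence} applied with this $s$, we get a constant $C_r$ with
\[
\|S_{q-1}u\|_{W^r(M)}+\|S_qu\|_{W^r(M)}+\|S_{q+1}'u\|_{W^r(M)}\leq C_r\|G_qu\|_{W^r(M)}
\]
for all $0\leq r\leq s$ and all $u\in C^\infty(M)$. In particular, taking $r=s$ and using the exact regularity of $G_q$ to bound $\|G_qu\|_{W^s(M)}\leq C\|u\|_{W^s(M)}$, we obtain $\|S_{q-1}u\|_{W^s(M)}+\|S_qu\|_{W^s(M)}+\|S_{q+1}'u\|_{W^s(M)}\leq C_s'\|u\|_{W^s(M)}$ for all $u\in C^\infty(M)$. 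Since $C^\infty(M)$ is dense in $W^s_{0,q}(M)$ (and in the relevant form spaces) and the three projections are already bounded on $L^2$, this a priori estimate extends by continuity to all of $W^s$, so each of $S_{q-1},S_q,S_{q+1}'$ is bounded on $W^s$. As $s\geq 0$ was arbitrary, all three projections are exactly regular.

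For the converse, assume $S_{q-1},S_q,S_{q+1}'$ are exactly regular. Fix $s\geq 0$; then these operators are in particular continuous on $W^s_{0,q-1}(M)$, $W^s_{0,q}(M)$, and $W^s_{0,q+1}(M)$ respectively, so the hypothesis of the second half of Theorem~\ref{thm:regularity equivalence} is met. That half gives that $G_q$ is a continuous operator on $W^s_{0,q}(M)$ together with the estimate $\|G_qu\|_{W^s(M)}\leq C_s(\|S_{q-1}u\|_{W^s(M)}+\|S_qu\|_{W^s(M)}+\|S_{q+1}'u\|_{W^s(M)})$ on $C^\infty(M)$; combined with exact regularity of the projections this yields $\|G_qu\|_{W^s(M)}\leq C_s'\|u\|_{W^s(M)}$ on the dense subspace $C^\infty(M)$, hence $G_q$ is bounded on $W^s$. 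Letting $s$ range over all nonnegative reals shows $G_q$ is exactly regular. The only point needing a word of care — and the closest thing to an ``obstacle'' — is the indexing shift: the first half of the theorem requires $G_q$ bounded on $W^{s+2}$ to conclude a $W^s$ estimate, whereas the second half only needs the projections bounded on $W^s$ to get $G_q$ on $W^s$; since ``exactly regular'' quantifies over all $s\geq 0$, the shift is harmless in both directions, but one should state explicitly that one applies the theorem ``for each $s$'' rather than trying to extract a single uniform statement. No genuinely hard step arises.
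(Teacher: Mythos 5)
Your proposal is correct and is exactly the intended deduction: the paper offers no separate proof of Corollary~\ref{cor:exact regularity} because it follows immediately from Theorem~\ref{thm:regularity equivalence} by quantifying over all $s\geq 0$, precisely as you do, with the density/$L^2$-boundedness step handling the passage from the a priori estimate on $C^\infty(M)$ to boundedness on $W^s$. Your remark that the $W^{s+2}$ versus $W^s$ indexing shift is absorbed by the universal quantifier in ``exactly regular'' is the right (and only) point of care.
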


\begin{prop}\label{prop:G_q dominates}
Let $M\subset \C^N$ be a smooth, compact, embedded, CR manifold of hypersurface type that satisfies weak $Y(q)$. Let $k\in\Z$ be a positive integer. If $u$ and $G_q u$ are both in $W^{k+2}_{0,q}(M)$
and $u \perp \H_q$, then
there exists a constant $C>0$ so that
\begin{multline*}
\|\dbarbs\dbarb G_q u\|_{W^k(M)} + \|\dbarb\dbarbs G_q u\|_{W^k(M)} +
\|\dbarb G_q u\|_{W^k(M)} + \|\dbarbs G_q u\|_{W^k(M)} \\
\leq C\big( \|G_q u\|_{W^k(M)} + \|u\|_{W^k(M)} \big).
\end{multline*}
\end{prop}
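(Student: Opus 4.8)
The plan is to estimate each of the four terms in turn, using the Hodge decomposition of $G_q u$ together with the basic identity (the $\Box_{b,t}$-estimate) and elliptic regularity in the microlocalized regions $\mathcal{C}^\pm$, $\mathcal{C}^0$. First I would observe that since $u \perp \H_q$, we have $\Box_b G_q u = u$, so $\dbarbs\dbarb G_q u + \dbarb\dbarbs G_q u = u$, and hence it suffices to bound one of those two terms (say $\dbarbs\dbarb G_q u$) in $W^k(M)$: the other is then controlled by $\|u\|_{W^k(M)}$ plus that bound. Likewise, once $\|\dbarb\dbarbs G_q u\|_{W^k}$ and $\|\dbarbs\dbarb G_q u\|_{W^k}$ are controlled, the estimates for $\|\dbarb G_q u\|_{W^k}$ and $\|\dbarbs G_q u\|_{W^k}$ follow from the identities $\|\dbarb G_q u\|^2 = (\dbarbs\dbarb G_q u, G_q u)$ and $\|\dbarbs G_q u\|^2 = (\dbarb\dbarbs G_q u, G_q u)$ (valid modulo lower-order errors and modulo the discrepancy $\dbarbs - \dbarbst$, which is order $0$ by \cite[Lemma 3.7]{Rai10}), interpolated up the Sobolev scale; more precisely, one applies $\Lambda^k$, commutes it through, and uses Cauchy–Schwarz with the already-bounded second-order terms and $\|G_q u\|_{W^k}$. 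So the crux reduces to bounding the two ``$\Box_b$-halves'' of $u$ in $W^k(M)$.

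The main step is therefore a subelliptic-type a priori estimate for $\Box_b$ in the Sobolev scale. Here I would work weightedly: choose $t$ large and use the weighted basic identity from Section \ref{subsec:weak z(q) and basic estimate} (in its CR-manifold incarnation from \cite{HaRa11}), which under weak $Y(q)$ gives a closed-range estimate controlling, in each microlocal sector, the ``good'' derivatives of a form by $\|\dbarb \cdot\|_t^2 + \|\dbarbst \cdot\|_t^2 + \|\cdot\|_t^2$. Apply this to $\Lambda^k$ (localized via the partition of unity $\zeta_\nu$ and dominated by $\tilde\zeta_\nu$) applied to the relevant pieces of $G_q u$ in the elliptic regions $\mathcal{C}^\pm_\nu$, where $\Box_b$ is genuinely elliptic of order two, so $\Psi^\pm_{t,\nu}\zeta_\nu G_q u$ gains two derivatives from $\Box_b G_q u = u$ being in $W^k$; and handle the $\mathcal{C}^0_\nu$ region — the ``bad'' direction $T$ — by noting that on $\mathcal{C}^0$ the characteristic variety is avoided in a weaker sense and one instead uses the weak $Y(q)$ basic estimate directly on $\Psi^0_{t,\nu}\zeta_\nu(\cdot)$ to absorb $T$-derivatives. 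Throughout, commutators of $\Lambda^k$ and the $\Psi$'s with $\dbarb$, $\dbarbst$, the cutoffs, and $\Upsilon$ produce lower-order terms that are either absorbed (small-constant) or thrown into $\|G_q u\|_{W^k} + \|u\|_{W^k}$; since the right-hand side of the proposition allows $\|G_q u\|_{W^k}$ itself, we do \emph{not} need to gain regularity over $G_q u$, which simplifies the bookkeeping considerably — we only need to bound $\dbarb G_q u$, $\dbarbs G_q u$ and their second-order combinations in terms of $\|G_q u\|_{W^k}$ and $\|u\|_{W^k}$.

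The argument is an induction on $k$: the base case $k=0$ (or $k$ in the range where the weighted $W^s$ theory of \cite{HaRa11,HaRa12} already holds, $-\tfrac12 \le s \le 1$) is essentially the known $L^2$ and low-order weighted theory, and the inductive step feeds the $W^{k-1}$ bound for the first-order quantities into the commutator error terms arising at level $k$. One passes between the weighted norms $\norm\cdot\norm_t$ and the ordinary $W^k$ norm using the equivalence $c_t\|\cdot\|_{L^2}^2 \le \norm\cdot\norm_t^2 \le C_t\|\cdot\|_{L^2}^2$ and the order-zero operator $F_t$ with $\la\cdot,\cdot\ra_t = (\cdot, F_t\cdot)_0$; at the top Sobolev level this is where smoothness of $\Upsilon$ (hence of all symbols) and the hypothesis $u, G_q u \in W^{k+2}$ are used to justify that all the integrations by parts and commutator manipulations are legitimate rather than merely formal. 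The main obstacle I anticipate is precisely the $\mathcal{C}^0$ (tangential, non-elliptic) region: one must show that the weak $Y(q)$ basic estimate, which a priori controls only the ``maximal'' derivatives and not the full $T$-derivative, nevertheless suffices to bound the relevant second-order terms after the microlocal decomposition — this is exactly where the two-sided nature of weak $Y(q)$ (namely weak $Z(q)$ \emph{and} weak $Z(n-1-q)$, handled on the $+$ and $-$ cones respectively with the sign choices $\lambda_\pm$ in the definition of $\norm\cdot\norm_t$) is indispensable, and where the bulk of the commutator and error-term analysis lives.
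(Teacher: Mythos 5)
Your opening reductions are sound and close to what the paper does: since $H_q u=0$, the Hodge decomposition gives $\dbarbs\dbarb G_q u+\dbarb\dbarbs G_q u=u$, so the two second-order terms control each other modulo $\|u\|_{W^k(M)}$, and the first-order quantities can be extracted from pairings such as $\|\Lambda^{k}_\nu\dbarb G_q u\|_{L^2(M)}^2=(\dbarbs\Lambda^{k}_\nu\dbarb G_q u,\Lambda^{k}_\nu G_q u)_0+O(\cdots)$ followed by Cauchy--Schwarz and absorption. But the paper's treatment of the ``crux'' is far more elementary than your plan. Its inductive step is pure integration by parts: summing the two first-order pairings produces $(\Lambda^{\ell+1}_\nu\Boxb G_q u,\Lambda^{\ell+1}_\nu G_q u)_0=(\Lambda^{\ell+1}_\nu u,\Lambda^{\ell+1}_\nu G_q u)_0$, which already yields the bounds on $\dbarb G_q u$ and $\dbarbs G_q u$; then $\|\Lambda^{\ell+1}_\nu\dbarbs\dbarb G_q u\|_{L^2(M)}^2$ is handled by moving one $\dbarb$ across, using $\dbarb\dbarbs\dbarb G_q u=\dbarb u$, and moving the surviving $\dbarb$ back off of $u$, so that only $\|u\|_{W^{\ell+1}(M)}$ and the already-controlled $\|\dbarb G_q u\|_{W^{\ell+1}(M)}$ appear. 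Weak $Y(q)$, the weights, and the microlocalization enter only through the base case $k=0$, quoted from \cite{HaRa11}; the hypothesis $u,G_q u\in W^{k+2}_{0,q}(M)$ is used exactly as you suspected, to legitimize the integrations by parts.

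The gap lies in your proposed main step, which rests on the weighted basic estimate and microlocal ellipticity, and three things go wrong there. First, the microlocal geometry is reversed: the characteristic variety of $\Boxb$ is the span of the codirection $\xi_{2n-1}$ dual to $T$, which lies inside $\mathcal{C}^{+}\cup\mathcal{C}^{-}$; it is $\mathcal{C}^0$ that is the elliptic region, while $\mathcal{C}^{\pm}$ are precisely where ellipticity fails and the signed weights $\lambda_{\pm}$ are needed. Second, the basic estimate bounds $\norm\vp\norm_t$ by $\norm\dbarb\vp\norm_t+\norm\dbarbst\vp\norm_t$, i.e., it controls a form by its derivatives, whereas Proposition \ref{prop:G_q dominates} asks for the reverse inequality (derivatives of $G_q u$ controlled by $G_q u$ and $u$); as a tool for this statement it points the wrong way. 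Third, the large-$t$ absorption arguments you invoke pertain to the weighted operators $G_{q,t}$, but the proposition concerns the unweighted $G_q$, about whose Sobolev continuity nothing is assumed beyond $G_q u\in W^{k+2}_{0,q}(M)$, so one cannot ``choose $t$ large'' in an identity involving $G_q$ itself. Your own observation that no gain of regularity over $G_q u$ is required is the right instinct; followed to its conclusion, it shows that the identity $\Boxb G_q u=u$ and integration by parts suffice, and none of the microlocal machinery is needed in the inductive step.
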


Proposition \ref{prop:G_q dominates} should be compared with \cite[Lemma
3.2]{Str10}.

\subsection{Smooth, bounded domains in a Stein manifold}\label{subsec:Stein mfld results}
We have a similar group of results for smooth, bounded domains in a Stein manifold.
\begin{thm}\label{thm:N_q regularity equivalence}
Let $M$ be a Stein manifold and $\Omega\subset M$ a smooth, bounded
domain that satisfies weak $Z(q)$ for some $1 \leq q \leq n-1$. Let
$s\geq 0$. If $N_q$ is a continuous operator
on $W^{s+2}_{0,q}(\Omega)$, then there exists a constant $C_r$ so that
\[
\|P_{q-1}u\|_{W^r(\Omega)} + \|P_qu\|_{W^r(\Omega)} + \|P_{q+1}' u\|_{W^r(\Omega)}
\leq C_r \|N_q u\|_{W^r(\Omega)}
\]
for all $u\in C^\infty(\Omega)$ and
for $0 \leq r \leq s$.

If $P_{q-1}$, $P_q$, and $P_{q+1}'$ are continuous operators on
$W^s_{0,q-1}(M)$, $W^s_{0,q}(M)$, and $W^s_{0,q+1}(M)$, respectively,
then $N_q$ is a continuous operator
on $W^{s}_{0,q}(M)$ and there exists a constant $C_{s}$ so that
\[
\|N_q\|_{W^s(\Omega)}  \leq C_s \big(\|P_{q-1}\|_{W^s(\Omega)}
+ \|P_q\|_{W^s(\Omega)} + \|P_{q+1}'\|_{W^s(\Omega)} \big).
\]
\end{thm}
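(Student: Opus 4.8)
The plan is to mirror, in the Stein-manifold setting, the proof of Theorem~\ref{thm:regularity equivalence} for $\dbarb$ on CR manifolds, since the $\dbar$-Neumann problem on a smooth bounded domain $\Omega\subset M$ in a Stein manifold carries exactly the same structural ingredients that were used there: a weighted $L^2$ theory with weights $\lambda_t(z)=e^{-t|z|^2}$ (now using a strictly plurisubharmonic exhaustion of the Stein manifold in place of $|z|^2$), the Hodge/Kohn decompositions
\[
I = \dbar\dbars N_q + \dbars\dbar N_q + \H_q,\qquad P_q = I - \dbars\dbar N_q,
\]
together with the definitions $P_{q-1} = I - \dbars N_q\dbar$ and $P_{q+1}' = \dbar N_q\dbars$, and the basic identity with the integration-by-parts matrix $\Upsilon$ governed by weak $Z(q)$ as recorded in Section~\ref{subsec:weak z(q) and basic estimate}. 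First I would state and use the analog of Proposition~\ref{prop:G_q dominates}: if $u$ and $N_q u$ both lie in $W^{k+2}_{0,q}(\Omega)$ with $u\perp\H_q$, then $\dbar N_q u$, $\dbars N_q u$, $\dbar\dbars N_q u$, $\dbars\dbar N_q u$ are all controlled in $W^k(\Omega)$ by $\|N_q u\|_{W^k(\Omega)} + \|u\|_{W^k(\Omega)}$; this is the domain analog of \cite[Lemma 3.2]{Str10} and is proved by elliptic regularization plus the microlocal/Kohn weighted estimates of \cite{HaRa11,HaRa12}, noting that in the Stein case the microlocalization is unnecessary because $\dbar$ is elliptic in the interior and the weighted theory controls boundary behavior directly.

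Next I would prove the first (forward) implication. Assuming $N_q$ is continuous on $W^{s+2}_{0,q}(\Omega)$, I would use Kohn's weighted formula $P_{q,t} = I - \dbarst\dbar N_{q,t}$ and the fact — established in \cite{HaRa12} and extended to $s\ge 1$ in Section~\ref{sec:Stein} of this paper — that the weighted operators $N_{q,t}$, and hence $P_{q,t}$, are exactly regular for $t$ large, with bounds depending on $t$. Then one writes $P_q = P_q$ in terms of the weighted projection $P_{q,t}$ via the order-zero invertible pseudodifferential-type comparison operator $F_t$ (or its domain counterpart: multiplication by $e^{-t\varphi}$ and solving $\dbar$ with weights), exactly as in the Boas–Straube argument \cite{BoSt90}: the key point is that for each Sobolev index $r\le s$ one can choose $t=t(r)$ large enough that $P_{q,t}$ handles the top-order derivatives, while the discrepancy $P_q - P_{q,t}$ (and the commutators of $F_t$ with differentiation) is lower order and absorbed using the inductive hypothesis on $r-1$ together with the domination from the Stein analog of Proposition~\ref{prop:G_q dominates}. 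The same bootstrapping applied to the factored expressions $\dbars N_q\dbar$ and $\dbar N_q\dbars$ yields the $W^r$ bounds on $P_{q-1}u$ and $P_{q+1}'u$.

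For the converse implication, I would assume $P_{q-1}$, $P_q$, $P_{q+1}'$ are continuous on the respective Sobolev spaces and recover regularity of $N_q$. Here I would use the formula expressing $N_q$ through the projections and the weighted Green operator — schematically $N_q = $ (weighted $N_{q,t}$ conjugated by $F_t$) corrected by terms built from $I - P_q$, $I - P_{q-1}$, $P_{q+1}'$ and their adjoints — so that the weighted operator supplies the gain of two derivatives on the range complementary to the kernels while the hypotheses on the three projections control the pieces living in those kernels. Again one proceeds by induction on $s$, choosing the weight parameter $t$ large depending on $s$, and uses the Stein analog of Proposition~\ref{prop:G_q dominates} to handle the first-order operators $\dbar N_q$, $\dbars N_q$ that appear when commuting derivatives past $\dbar$, $\dbars$. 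The main obstacle, and the step requiring genuine care rather than transcription, is establishing the exact/global regularity of the weighted operators $N_{q,t}$ and the mapping properties of the comparison device $F_t$ for \emph{all} $s\ge 1$ on a Stein manifold: \cite{HaRa12} only develops the weighted Sobolev theory for $-\tfrac12\le s\le 1$, so Section~\ref{sec:Stein} must bootstrap it to higher $s$ under the extra hypothesis that $\Omega$ is smooth, and one must check that the constants and the microlocal-free argument survive the transition from $\C^n$ to a general Stein manifold (in particular that a global strictly plurisubharmonic exhaustion can play the role of $|z|^2$ uniformly on $\bar\Omega$). Everything else is a routine, if lengthy, adaptation of the CR-manifold proof and of \cite{BoSt90}.
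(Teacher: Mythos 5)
Your proposal follows essentially the same route as the paper: the authors prove Theorem~\ref{thm:N_q regularity equivalence} by repeating the CR-manifold arguments of Section~\ref{sec:proofs}, combined with the weighted theory of Theorem~\ref{thm:weighted theory} (the extension of \cite{HaRa12} from $-\tfrac12\le s\le 1$ to all $s\ge -\tfrac12$, which you correctly identify as the one step requiring genuine work) and the $L^2$ theory of \cite{HaRa12}. The only adaptation you phrase differently is the paper's replacement of the operators $\Lambda^k$ by tangential derivatives $D_{T^\alpha}$ (needed because $\Lambda^k$ does not preserve $\Dom(\dbars)$ on a domain with boundary), which is the concrete form of your remark that interior ellipticity renders the microlocalization unnecessary.
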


\begin{cor}\label{cor:N_q exact regularity}
Let $M$ be a Stein manifold and $\Omega\subset M$ a smooth, bounded
domain that satisfies weak $Z(q)$ for some $1 \leq q \leq n-1$. Then
$N_q$ is exactly regular if and only if
$P_{q-1}$, $P_q$, and $P_{q+1}'$ are exactly regular.
\end{cor}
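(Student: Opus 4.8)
The plan is to deduce Corollary \ref{cor:N_q exact regularity} from Theorem \ref{thm:N_q regularity equivalence} by observing that ``exactly regular'' is precisely the condition that the pair of Sobolev estimates in Theorem \ref{thm:N_q regularity equivalence} holds for \emph{all} $s\geq 0$ simultaneously. Recall that an operator $T$ is exactly regular if it is continuous on $W^s$ for every $s\geq 0$. Since $\Omega$ is bounded with smooth boundary, $C^\infty(\overline\Omega)$ is dense in each $W^s_{0,q}(\Omega)$, so the estimates of Theorem \ref{thm:N_q regularity equivalence}, stated a priori for $u\in C^\infty(\overline\Omega)$, extend by continuity to all of $W^s$; thus it suffices to produce the bounds on the dense class $C^\infty(\overline\Omega)$.

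First I would prove the forward implication. Suppose $N_q$ is exactly regular. Fix $s\geq 0$. Then $N_q$ is continuous on $W^{s+2}_{0,q}(\Omega)$, so the first half of Theorem \ref{thm:N_q regularity equivalence} (with $r=s$) gives
\[
\|P_{q-1}u\|_{W^s(\Omega)} + \|P_qu\|_{W^s(\Omega)} + \|P_{q+1}'u\|_{W^s(\Omega)}
\leq C_s \|N_q u\|_{W^s(\Omega)} \leq C_s' \|u\|_{W^s(\Omega)}
\]
for all $u\in C^\infty(\overline\Omega)$, where the last step uses continuity of $N_q$ on $W^s$ and $W^s\hookrightarrow W^{s+2}$ is wrong in that direction---instead I use that $N_q$ continuous on $W^{s+2}$ together with the trivial bound $\|N_q u\|_{W^s}\le \|N_q u\|_{W^{s+2}}\le C\|u\|_{W^{s+2}}$, and then re-run the argument starting from $W^{s+2}$ input to get $W^s$ output; more cleanly, one applies the first estimate with $s$ replaced by any $s'\geq s$ and $r=s$. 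In any case, each of the three projections is continuous on $W^s_{0,q-1}(\Omega)$, $W^s_{0,q}(\Omega)$, $W^s_{0,q+1}(\Omega)$ respectively. Since $s\geq 0$ was arbitrary, $P_{q-1}$, $P_q$, $P_{q+1}'$ are all exactly regular.

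For the reverse implication, suppose $P_{q-1}$, $P_q$, $P_{q+1}'$ are exactly regular. Fix $s\geq 0$. Then these three projections are continuous on $W^s_{0,q-1}(\Omega)$, $W^s_{0,q}(\Omega)$, $W^s_{0,q+1}(\Omega)$, so the second half of Theorem \ref{thm:N_q regularity equivalence} asserts that $N_q$ is continuous on $W^s_{0,q}(\Omega)$ with
\[
\|N_q u\|_{W^s(\Omega)} \leq C_s \big(\|P_{q-1}u\|_{W^s(\Omega)} + \|P_q u\|_{W^s(\Omega)} + \|P_{q+1}'u\|_{W^s(\Omega)}\big) \leq C_s' \|u\|_{W^s(\Omega)}
\]
for $u\in C^\infty(\overline\Omega)$, hence for all $u\in W^s$ by density. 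As $s\geq 0$ is arbitrary, $N_q$ is exactly regular. The argument is essentially bookkeeping, so there is no real obstacle; the only point requiring a little care is the direction of the Sobolev indices in the first half of Theorem \ref{thm:N_q regularity equivalence} (continuity of $N_q$ is hypothesized at level $s+2$ but the conclusion controls the projections at level $r\leq s$), which is why one should take $s'$ large and $r$ equal to the desired index, and invoke density of $C^\infty(\overline\Omega)$ to pass from the stated a priori estimates to genuine operator bounds. Corollary \ref{cor:exact regularity} follows from Theorem \ref{thm:regularity equivalence} in exactly the same way, using density of $C^\infty(M)$ in $W^s(M)$ on the compact manifold $M$.
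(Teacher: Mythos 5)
Your argument is correct and is essentially the paper's own (the paper treats this corollary as an immediate consequence of Theorem \ref{thm:N_q regularity equivalence}, exactly as you do for Corollary \ref{cor:exact regularity}): quantify over all $s\geq 0$, use exact regularity of $N_q$ to satisfy the $W^{s+2}$ hypothesis and to bound $\|N_qu\|_{W^s}$ by $\|u\|_{W^s}$ in the forward direction, and apply the second half of the theorem directly in the reverse direction, extending from the dense class by continuity.
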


\begin{prop}\label{prop:N_q dominates}
Let $M$ be a Stein manifold and $\Omega\subset M$ a smooth, bounded
domain that satisfies weak $Z(q)$ for some $1 \leq q \leq n-1$.
Let $k\in\Z$ be a positive integer. If $u$ and $N_q u$ are both in $W^{k+2}_{0,q}(M)$
and $u \perp \H_q$, then
there exists a constant $C>0$ so that
\begin{multline*}
\|\dbars\dbar N_q u\|_{W^k(\Omega)} + \|\dbar\dbars N_q
u\|_{W^k(\Omega)} + \|\dbar N_q u\|_{W^k(\Omega)} + \|\dbars N_q
u\|_{W^k(\Omega)}\\
 \leq C\big( \|N_q u\|_{W^k(\Omega)} +
\|u\|_{W^k(\Omega)} \big).
\end{multline*}
\end{prop}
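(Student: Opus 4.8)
The plan is to prove this by reducing to the CR analogue, Proposition \ref{prop:G_q dominates}, whose proof we would carry out first and then adapt. (Since the paper says the Stein/$\dbar$-Neumann case follows the CR argument with ``minor and well-known modifications,'' Proposition \ref{prop:N_q dominates} is really the transcription of Proposition \ref{prop:G_q dominates} with $\dbarb \rightsquigarrow \dbar$, $M \rightsquigarrow \Omega$, $G_q \rightsquigarrow N_q$, and the boundary condition $\vp \in \Dom(\dbars)$ built in.) So below I describe how I would prove Proposition \ref{prop:G_q dominates}, and then indicate the changes for Proposition \ref{prop:N_q dominates}.

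First I would set $\vp = G_q u$ and observe that since $u \perp \H_q$, the Hodge decomposition gives $u = \Box_{b}\vp = \dbarb\dbarbs\vp + \dbarbs\dbarb\vp$ and $\vp \perp \H_q$. The first goal is to show $\|\dbarb\vp\|_{W^k}^2 + \|\dbarbs\vp\|_{W^k}^2 \les \|\vp\|_{W^k}\|u\|_{W^k} + (\text{lower order})$, which after absorbing yields control of $\dbarb\vp$ and $\dbarbs\vp$ in $W^k$ by $\|\vp\|_{W^k} + \|u\|_{W^k}$. The natural device is to run the weighted basic estimate from Section \ref{subsec:weak z(q) and basic estimate} (its $\dbarb$ incarnation, established in \cite{HaRa11,HaRa12}) not on $\vp$ itself but on $\Lambda^k$-type derivatives of $\vp$ localized by the partition of unity $\{\zeta_\nu\}$ and microlocalized by the $\Psi^{\pm,0}_{t,\nu}$; this is exactly the mechanism by which the $\norm\cdot\norm_t$ norms and the constant $A$ were introduced. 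The key point is that on the $\mathcal{C}^+_\nu$ and $\mathcal{C}^-_\nu$ cones one gets a \emph{gain} from the weight (the $t(q - \Tr\Upsilon)$ term, which by Definition \ref{defn:weak Z(q)}(iii) and the weak $Y(q)$ hypothesis has a definite sign in the relevant microlocal sector once one chooses $t = t(k)$ large), so the top-order term $\|\Lambda^k\vp\|$ can be absorbed on the $\mathcal{C}^\pm$ pieces; on the $\mathcal{C}^0_\nu$ piece, where $T$ is elliptic in the remaining (good) directions, one controls things by elliptic regularity for the tangential system. Commuting $\Lambda^k$, $\zeta_\nu$, $\Psi_{t,\nu}$ past $\dbarb$ and $\dbarbs$ produces commutator terms of one order lower, which feed an induction on $k$ (the base case $k=0$ being the $L^2$ theory of Section \ref{sec:L^2 theory}); these commutators are where one also picks up the $\|u\|_{W^k}$ on the right via $u = \Box_b \vp$ and integration by parts.

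Once $\|\dbarb G_q u\|_{W^k} + \|\dbarbs G_q u\|_{W^k} \les \|G_q u\|_{W^k} + \|u\|_{W^k}$ is in hand, the remaining two terms are a second application of the same machinery one degree up: $\dbarbs\dbarb\vp = u - \dbarb\dbarbs\vp$, so it suffices to bound $\|\dbarb\dbarbs\vp\|_{W^k}$, and symmetrically $\|\dbarbs\dbarb\vp\|_{W^k}$. For these one applies the $W^k$ a priori estimate (the basic estimate plus elliptic/microlocal bookkeeping just described) to the $(0,q+1)$-form $\dbarb\vp$ and the $(0,q-1)$-form $\dbarbs\vp$ respectively — noting that $\dbarb\vp \perp \H_{q+1}$ and $\dbarbs\vp \perp \H_{q-1}$, that weak $Y(q)$ supplies weak $Z(q)$ and weak $Z(n-1-q)$, and that the closed-range theory of \cite{HaRa11,HaRa12} at these adjacent levels is what makes the estimate applicable — together with the already-established first-order bound and the identities $\dbarb(\dbarb\vp) = 0$, $\dbarbs(\dbarbs\vp) = 0$ to kill half the terms. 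Collecting the four pieces gives the stated inequality.

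For Proposition \ref{prop:N_q dominates} the plan is identical with $\dbar$ in place of $\dbarb$: set $\vp = N_q u$, use $\vp \in \Dom(\dbar)\cap\Dom(\dbars)$, $u = \Box\vp$, and $\vp,\dbar\vp,\dbars\vp$ perpendicular to the respective harmonic spaces; run the weighted basic identity displayed at the end of Section \ref{subsec:weak z(q) and basic estimate} (whose $W^k$ version for $s \ge 1$ and smooth bounded $\Omega$ is the weighted Sobolev theory the paper says it develops in Section \ref{sec:Stein}) on tangential-derivative/microlocalized components of $\vp$, obtaining the gain from the $t(q - \Tr\Upsilon)$ term under weak $Z(q)$; then bootstrap up one level to control $\dbars\dbar N_q u$ and $\dbar\dbars N_q u$ exactly as above, using $\Box\vp = u$ to convert one factor into $u$ and the boundary conditions to discard the vanishing terms. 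The main obstacle, in both cases, is organizing the microlocal/commutator bookkeeping so that the top-order term genuinely gets absorbed: one must choose the weight parameter $t$ (depending on $k$) large enough that the sign-definite $t(q-\Tr\Upsilon)$ term dominates the order-$k$ error terms on the $\mathcal{C}^\pm$ cones while still controlling the $\mathcal{C}^0$ piece by ellipticity — this is the heart of Kohn's weighted method, and here it has to be propagated through the three form levels $q-1,q,q+1$ simultaneously, which is the one genuinely delicate point beyond routine calculation.
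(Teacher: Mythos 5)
Your overall framing is right -- the paper does prove this proposition by transcribing the proof of Proposition \ref{prop:G_q dominates}, with $\dbar$ in place of $\dbarb$ and tangential derivatives $D_{T^\alpha}$ in place of $\Lambda^k$ -- but the mechanism you propose for the key estimates is not the one that works, and in one place it is not available under the hypotheses. The paper's argument never invokes the weighted basic estimate, the microlocal decomposition $\Psi^{\pm,0}_{t,\nu}$, or the $t(q-\Tr\Upsilon)$ gain. It is a direct integration-by-parts computation: with $\vp=N_qu$ and $u\perp\H_q$ one has $\Box\vp=u$, so
\[
\|\Lambda^{k}_\nu \dbar \vp\|^2+\|\Lambda^{k}_\nu\dbars\vp\|^2
=\big(\Lambda^{k}_\nu\Box\vp,\Lambda^{k}_\nu\vp\big)_0+O(\text{commutators})
=\big(\Lambda^{k}_\nu u,\Lambda^{k}_\nu\vp\big)_0+O(\text{commutators}),
\]
and Cauchy--Schwarz finishes, with the commutators absorbed by induction on $k$. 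Nothing needs to be ``absorbed via the weight'': $\|N_qu\|_{W^k}$ sits on the right-hand side of the conclusion, so there is no top-order term to kill. Moreover the basic estimate runs in the wrong direction for your stated first goal -- it bounds $\|\vp\|$ \emph{from above} by $\|\dbar\vp\|+\|\dbarst\vp\|$, whereas here one needs an upper bound on $\|\dbar\vp\|^2+\|\dbars\vp\|^2$, which comes only from the elementary identity $(\Box\vp,\vp)$. The weighted/microlocal apparatus belongs to the proof of Theorem \ref{thm:N_q regularity equivalence} (where unweighted operators are expressed through weighted ones), not to this proposition.

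The more serious gap is your treatment of $\dbars\dbar N_qu$ and $\dbar\dbars N_qu$: you propose to apply the $W^k$ a priori estimate to the $(0,q+1)$-form $\dbar\vp$ and the $(0,q-1)$-form $\dbars\vp$. But weak $Z(q)$ (resp.\ weak $Y(q)$ in the CR case) yields the basic estimate only at form level $q$; nothing is assumed at levels $q\pm1$, and the paper is explicit that even the existence of $G_{q-1}$ (hence any Hodge theory at level $q-1$) is unknown. The paper avoids this entirely by staying at level $q$: from the Hodge decomposition $\dbar\dbars\dbar N_qu=\dbar u$, so pairing $\Lambda^{k}_\nu\dbars\dbar N_qu$ against itself, moving one $\dbar$ across, and using this identity converts the top-order factor into $\Lambda^{k}_\nu u$, after which one absorbs using the already-established bound on $\|\dbar N_qu\|_{W^{k}}$. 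Your plan as written would require closed-range/basic estimates at the adjacent form levels, which the hypotheses do not supply.
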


In summary,
we have generalized the approach of \cite{BoSt90} in
several ways.

First, we deal with the boundary analogue,
that is, with the complex Green operator and the Szeg\"o projection.
Second, we do not require pseudoconvexity and instead focus on
obtaining results for a fixed $q$, $1\leq q \leq n$.  Third, we reduce the regularity hypotheses
in the relationship between the Szeg\"o (resp., Bergman) projection
and the complex Green (resp., $\dbar$-Neumann) operator. Finally,
 we
wanted to establish that the regularity arguments are quite general
and require
only an appropriate weighted Sobolev theory and Hodge-*decomposition.
We provide two examples where the first and third
authors have established the necessary ingredients.

%
%
\section{Proof of Theorem \ref{thm:regularity equivalence} and Proposition \ref{prop:G_q dominates}}\label{sec:proofs}

In \cite{HaRa11}, Harrington and Raich discussed how the regularity of
$G_{q,t} \dbarb$ and $G_{q,t}\dbarbst$ follows from the regularity of
$G_{q,t}$. We provide a proof of this fact for completeness.
\begin{prop}\label{prop:regularity of G dbarbst}
Let $M$ be a smooth CR manifold of hypersurface type that satisfies
the hypotheses of Theorem \ref{thm:regularity equivalence}.
For each $s\geq 0$, there exists $T_s$ so that if $t\geq T_s$ then
$G_{q,t}\dbarb:W^s_{0,q-1}(M) \to W^s_{0,q}(M)$ and
$G_{q,t}\dbarbst:W^s_{0,q+1}(M) \to W^s_{0,q}(M)$ continuously.
\end{prop}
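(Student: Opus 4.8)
The plan is to reduce the Sobolev regularity of $G_{q,t}\dbarb$ and $G_{q,t}\dbarbst$ to the assumed regularity of $G_{q,t}$ itself, exploiting the weighted Hodge decomposition and the ellipticity of $\Box_{b,t}$ in the good microlocal directions. First I would recall the weighted Kohn identity $G_{q,t}\dbarb = \dbarbst G_{q-1,t}'\dbarb$-type relations; more precisely, since $G_{q-1,t}$ need not exist, I would work directly with $S_{q-1,t} = I - \dbarbst G_{q,t}\dbarb$ and the decomposition $I = \dbarb\dbarbst G_{q,t} + \dbarbst\dbarb G_{q,t} + H_{q,t}$. The key algebraic fact is that $G_{q,t}\dbarb$ maps into $\ker\dbarb$ composed with the range of $\dbarbst$, and that on such forms $\Box_{b,t}$ acts as $\dbarb\dbarbst$, so $G_{q,t}\dbarb = \dbarb\dbarbst G_{q,t}(G_{q,t}\dbarb)$ — this lets one bootstrap. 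I would set $v = G_{q,t}\dbarb u$ for $u\in W^s_{0,q-1}(M)$ smooth, note $\dbarbst v = 0$ and $\dbarb v = S_{q,t}\dbarb u$ (or rather $\dbarb v \in \Ran\dbarb$ is controlled via the closed range estimate), and aim to estimate $\|v\|_{W^s}$.

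Second, the heart of the argument is a microlocal elliptic estimate. Using the pseudodifferential decomposition $\Psi^+_{t,\nu}, \Psi^0_{t,\nu}, \Psi^-_{t,\nu}$ from the Norms subsection, on the $\Psi^0$ piece the operator $\Box_{b,t}$ (equivalently the relevant sum of $L_j$, $\bar L_k$ derivatives appearing in the basic identity) is elliptic up to the bad direction $T$, which is itself controlled in $\mathcal C^0$; hence one gains two derivatives there from $\|\Box_{b,t} v\|$. On the $\Psi^+$ and $\Psi^-$ pieces one does not have ellipticity, but here the weighted basic estimate from \cite{HaRa11} gives, for $t$ large, an a priori inequality of the form $\norm v\norm_{t}^2 \lesssim \la \Box_{b,t} v, v\ra_t$ with the correct sign of $t$ built into $\lambda_\pm$, and this can be iterated (commuting $\Lambda^s$ through, absorbing the order-zero error $\dbarbs-\dbarbst$ and the commutators $[\Lambda^s,\Psi^\pm_{t,\nu}]$, which are lower order) to propagate $W^s$ bounds. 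Since $\Box_{b,t} v = \dbarbst\dbarb v$ and $\dbarb v = \dbarb u - \dbarb\dbarbst G_{q,t}\dbarb u$, one has $\|\Box_{b,t} v\|_{W^{s-1}} \lesssim \|u\|_{W^s}$ after using the closed-range estimate once more, so the gain of one derivative suffices to close the induction on $s$ in integer steps, with interpolation (or the standard elliptic regularization with Friedrichs mollifiers) filling in non-integer $s$.

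Third, for $G_{q,t}\dbarbst$ I would either run the dual argument — using weak $Y(q)$, i.e. weak $Z(n-1-q)$, so that the same machinery applies at level $q+1$ with roles of $\Psi^+$ and $\Psi^-$ interchanged, which is exactly why $\lambda_-$ is defined with the opposite sign convention — or observe $G_{q,t}\dbarbst = (\,\overline{\dbarb G_{q,t}}\,)^{\!*}$-type duality and transpose the first estimate. The choice of threshold $T_s$ increasing with $s$ comes from the fact that the error terms in the weighted identity scale like $\|\Upsilon\|_{C^2}$-constants times lower-order norms, which must be absorbed, and absorbing $s$ derivatives' worth of commutators requires $t$ comparably large (this is the mechanism already present in \cite{Nic06,Rai10,HaRa11}).

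The main obstacle I expect is the $\Psi^+$/$\Psi^-$ directions: there is genuinely no elliptic gain, so every derivative must be paid for through the weighted basic estimate, and one must verify carefully that commuting $\Lambda^s$ (and the cutoffs $\zeta_\nu,\tilde\zeta_\nu$) past the microlocal cutoffs and past $\dbarb,\dbarbst$ produces only errors that are either order $\le s-1$ (hence handled by induction) or order $\le s$ but supported where a dominating cutoff is $\equiv 1$ (hence reabsorbable), all while keeping the $t$-dependence explicit enough to choose $T_s$. The bookkeeping of the order-zero discrepancy $\dbarbs-\dbarbst$ from \cite[Lemma 3.7]{Rai10} interacting with $\Lambda^s$ is the technical crux; everything else is a routine, if lengthy, iteration of the basic identity.
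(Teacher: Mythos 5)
Your overall architecture is the same as the paper's: commute $\Lambda^s$ past $\dbarb$ and $\dbarbst$, invoke the weighted basic estimate (which for $t\geq T_s$ supplies a small constant $\ep$ in front of $\norm\dbarb\Lambda^s\cdot\norm_t^2+\norm\dbarbst\Lambda^s\cdot\norm_t^2$ plus a $C_t\norm\Lambda^{s-1}\cdot\norm_t^2$ error), and induct on $s$. But the step by which you recover the derivative lost to $\dbarb$ (resp.\ $\dbarbst$) is not right. You assert that $\|\Box_{b,t}v\|_{W^{s-1}}\lesssim\|u\|_{W^s}$ and that ``the gain of one derivative suffices to close the induction.'' Under weak $Y(q)$ there is no inequality of the form $\|v\|_{W^s}\lesssim\|\Box_{b,t}v\|_{W^{s-1}}+\dots$: in the microlocal regions $\mathcal{C}^\pm$ there is no (sub)elliptic gain at all --- that is exactly what separates weak $Y(q)$ from $Y(q)$ --- and the basic estimate only improves the \emph{constant} (via $\ep$ or $1/t$), not the derivative count. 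Pairing $\la\Lambda^s\Box_{b,t}v,\Lambda^s v\ra_t$ with $\Box_{b,t}v=\dbarb u$ and applying Cauchy--Schwarz costs either $\|u\|_{W^{s+1}}$ or $\|v\|_{W^{s+1}}$, so the scheme as written does not close on the $\Psi^\pm$ pieces.

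The missing idea (which is the heart of the paper's proof) is a \emph{second} integration by parts. For $w=G_{q,t}\dbarbst g$ one has $\dbarbst w=0$, so the basic estimate collapses to $\norm\Lambda^s w\norm_t^2\leq\ep\norm\Lambda^s\dbarb w\norm_t^2+C_t\norm\Lambda^{s-1}w\norm_t^2$; one then writes $\norm\Lambda^s\dbarb w\norm_t^2=\la\Lambda^s w,\Lambda^s\dbarbst\dbarb w\ra_t+\dots=\la\Lambda^s w,\Lambda^s\dbarbst g\ra_t+\dots$ using the Hodge identity $\dbarbst\dbarb G_{q,t}\dbarbst g=\dbarbst g$, and finally moves $\dbarbst$ back to obtain $\la\Lambda^s\dbarb w,\Lambda^s g\ra_t$ plus commutators. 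In this way only $s$ derivatives ever fall on $g$; the factor $\norm\Lambda^s\dbarb w\norm_t$ is absorbed into the left side, $\norm\Lambda^s w\norm_t^2$ is absorbed using the $\ep$ from the basic estimate, and all $\Lambda^{s-1}$ terms are handled by the induction hypothesis. You should replace your elliptic-gain step with this mechanism. (Two smaller corrections: for $v=G_{q,t}\dbarb u$ it is $\dbarb v$, not $\dbarbst v$, that vanishes, so $\Box_{b,t}v=\dbarb\dbarbst v=\dbarb u$; your identities are stated for the mirror operator. And the base cases $s=0,1$ need not be re-derived --- they are quoted from the earlier weighted theory.)
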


\begin{proof}
We show that $G_{q,t}\dbarbst:W^{s}_{0,q+1}(\Omega) \to
W^{s}_{0,q}(\Omega)$ and $G_{q,t}\dbarb:W^{s}_{0,q-1}(M) \to
W^{s}_{0,q}(M)$ continuously.
The cases $s=0$ and $s=1$ are proven in \cite[Theorem 1.2]{HaRa12}
(see also \cite[Theorem 4.3]{HaRa12}. We can adapt Harrington and
Raich's argument from \cite{HaRa12} for larger $s$.
Observe that
\[
\dbarb \Lambda^s G_{q,t} f = [\dbarb, \Lambda^s] G_{q,t} f  +  \Lambda^s \dbarb G_{q,t} f
\]
and
\[
\dbarbst \Lambda^s G_{q,t} f = [\dbarbst, \Lambda^s] G_{q,t} f  +  \Lambda^s \dbarbst G_{q,t} f.
\]
Implicit in \cite{HaRa11} is the fact that if $\ep>0$, then for $t$ large enough we have
\[
\norm\Lambda^s G_{q,t} f \norm_{t}^2 \leq \ep\Big( \norm\dbarb
\Lambda^s G_{q,t}f\norm_t^2 + \norm\dbarbst \Lambda^s
G_{q,t}f\norm_t^2\Big) + C_t \norm\Lambda^{s-1} G_{q,t} f\norm_t^2.
\]
Since $f$ has smooth coefficients, by choosing $t$ larger (if
necessary), we can use a small constant/large constant argument and
estimate
\[
\norm\Lambda^s G_{q,t} f \norm_{t}^2 \leq \ep\Big( \norm\Lambda^s
\dbarb  G_{q,t}f\norm_t^2 + \norm\Lambda^s \dbarbst
G_{q,t}f\norm_t^2\Big) + C_t \norm\Lambda^{s-1} G_{q,t} f\norm_t^2.
\]

Next, suppose that $f = \dbarbst g$ for a $(0,q+1)$-form with smooth
coefficients. Using induction in $s$ to control $\norm\Lambda^{s-1}G_{q,t}\dbar^*_{b,t}g\norm_t$, we have
\begin{equation}\label{eqn:good Lambda^s G dbarbst g bound}
\norm\Lambda^s G_{q,t}\dbarbst g \norm_t^2
\leq \ep  \norm\Lambda^s \dbarb  G_{q,t} \dbarbst g\norm_t^2 + C_t
\norm\Lambda^{s-1} g\norm_t^2.
\end{equation}
We now handle the term $\norm\Lambda^s \dbarb  G_{q,t} \dbarbst
g\norm_t^2$. Observe that $[\dbar_{b,t}^*,\Lambda^s]=O(\Lambda^s)+tO(\Lambda^{s-1})$.  We adopt the convention that the constant implicit in the error terms is independent of $t$, and we use $C_t$ to represent constants depending on $t$.  We estimate
\begin{align*}
&\norm\Lambda^s \dbarb  G_{q,t} \dbarbst g\norm_t^2\\
& = \big\la\Lambda^s G_{q,t} \dbarbst g , \dbarbst \Lambda^s\dbarb G_{q,t} \dbarbst g \big\ra_t + \big\la[\Lambda^s,\dbarb] G_{q,t} \dbarbst g ,  \Lambda^s\dbarb G_{q,t} \dbarbst g \big\ra_t \\
&\leq \big|\big\la\Lambda^s G_{q,t} \dbarbst g , \Lambda^s\dbarbst \dbarb G_{q,t} \dbarbst g \big\ra_t\big| \\
&\qquad\qquad + O\Big(\norm\Lambda^s G_{q,t}\dbarbst g\norm_t\big(
\norm\Lambda^s\dbarb G_{q,t} \dbarbst g\norm_t +
C_t\norm\Lambda^{s-1}\dbar_b G_{q,t} \dbarbst g\norm_t \big)\Big)\\
&\leq \big|\big\la\Lambda^s G_{q,t} \dbarbst g , \Lambda^s \dbarbst g
\big\ra_t\big| + \frac
12\norm\Lambda^s\dbarb G_{q,t} \dbarbst g\norm_t\\
&\qquad\qquad + O\left(\norm\Lambda^s G_{q,t}\dbarbst g\norm_{t}^2
+ C_t\norm\Lambda^{s-1}\dbar_b G_{q,t} \dbarbst g\norm_t ^2\right).
\end{align*}
Thus, using induction in $s$ to estimate $\norm\Lambda^{s-1}\dbar_b G_{q,t} \dbarbst g\norm_t ^2$,
\[
\norm \Lambda^s\dbarb G_{q,t} \dbarbst g\norm_t^2 \leq
2\big|\big\la\Lambda^s G_{q,t} \dbarbst g , \Lambda^s \dbarbst g
\big\ra_t\big| + C\norm\Lambda^s G_{q,t}\dbarbst g\norm_t^2 +
C_t\norm\Lambda^{s-1} g\norm_t^2.
\]
Next,
\begin{multline*}
\big\la\Lambda^s G_{q,t} \dbarbst g , \Lambda^s \dbarbst g \big\ra_t =
\big\la\Lambda^s \dbarb G_{q,t} \dbarbst g , \Lambda^s g \big\ra_t \\
+ O\Big(\norm\Lambda^s G_{q,t} \dbarbst g\norm_t
\norm\Lambda^{s}g\norm_t+ C_t\norm\Lambda^s G_{q,t} \dbarbst g\norm_t \norm\Lambda^{s-1}g\norm_t\Big).
\end{multline*}
Thus, by absorbing terms after a small constant/large constant argument, we have
\[
\norm \Lambda^s\dbarb G_{q,t} \dbarbst g\norm_t^2 \leq  C\norm\Lambda^s g\norm_t^2 + C\norm\Lambda^s G_{q,t}\dbarbst g\norm_t^2 +  C_t\norm\Lambda^{s-1} g\norm_t^2.
\]
Finally, by choosing $\ep$ sufficiently small in \eqref{eqn:good Lambda^s G dbarbst g bound} to absorb the $\norm\Lambda^s G_{q,t}\dbarbst g\norm_t^2$ terms, we have proven
\[
\norm\Lambda^s G_{q,t}\dbarbst g \norm_t^2
\leq \ep \norm\Lambda^s g\norm_t^2 + C_t \norm\Lambda^{s-1} g\norm_t^2.
\]

The argument to prove
\[
\norm\Lambda^s G_{q,t}\dbarb g \norm_t^2
\leq \ep \norm\Lambda^s g\norm_t^2 + C_t \norm\Lambda^{s-1} g\norm_t^2
\]
is similar, the only difference being that $\dbarbst$ creates lower order terms that depend on $t$, but those are handled with the induction hypothesis and the $C_t \norm\Lambda^{s-1} g\norm_t^2$ term.
This proves the proposition.
\end{proof}

\subsection{Proof of Proposition \ref{prop:G_q dominates}}
 Since $C^\infty_{0,q}(M)$ is dense in $W^k_{0,q}(M)$, it suffices to show the result for $u\in C^\infty_{0,q}(M)$.
Our proof goes by induction. Since $M$ satisfies weak $Y(q)$, the $k=0$ case was proved in \cite{HaRa11}. Assume that the result holds for all $\ell'$ so that $0 \leq \ell'\leq \ell \leq k-1$.
Set $\Lambda^s_\nu = \tilde\zeta_\nu \Lambda^s \zeta_\nu$. Then
\[
\|\dbarb G_q u\|_{W^{\ell+1}(M)}^2 + \|\dbarbs G_q u\|_{W^{\ell+1}(M)}^2
= \sum_{\nu}\big( \|\Lambda^{\ell+1}_\nu  \dbarb G_q u\|_{L^2(M)}^2 + \|\Lambda^{\ell+1}_\nu \dbarbs G_q u\|_{L^2(M)}^2\big).
\]
Examining one term from the sum (call it $RHS$), we first observe that $\dbarbs\Lambda_\nu^{\ell+1}\dbarb G_q  u$ and
$\dbarb\Lambda^{\ell+1}_\nu\dbarbs G_q  u$ are both well-defined terms. For,
\begin{align*}
\Lambda^{\ell+1}_\nu  u + [\dbarbs,\Lambda^{\ell+1}_\nu]& \dbarb G_q  u + [\dbarb,\Lambda^{\ell+1}_\nu] \dbarbs G_q  u \\
&=\Lambda^{\ell+1}_\nu (\dbarb\dbarbs+\dbarbs\dbarb) G_q  u + [\dbarbs,\Lambda^{\ell+1}_\nu] \dbarb G_q  u + [\dbarb,\Lambda^{\ell+1}_\nu] \dbarbs G_q  u \\
&= \dbarbs\Lambda_\nu^{\ell+1}\dbarb G_q  u + \dbarb\Lambda^{\ell+1}_\nu\dbarbs G_q  u,
\end{align*}
so we can make sense of the right-hand side in terms of $\ell+2$
derivatives of $G_q u$ and $\ell+1$ derivatives of $u$, both
well-defined quantities.
We can use integration by parts to observe $RHS$ equals
\begin{align*}
&\big(\dbarbs\Lambda_\nu^{\ell+1}\dbarb G_q  u,\Lambda^{\ell+1}_\nu
G_q  u\big)_0 + \big(\dbarb\Lambda^{\ell+1}_\nu\dbarbs G_q
u,\Lambda^{\ell+1}_\nu G_q  u\big)_0 \\
&\qquad+ O\big(\|\Lambda_\nu^{\ell+1}G_q
u\|_{L^2(M)}(\|\Lambda_\nu^{\ell+1}\dbarb G_q  u\|_{L^2(M)}  +
\|\Lambda_\nu^{\ell+1}\dbarbs G_q  u\|_{L^2(M)} )\big) \\
&= \big(\Lambda^{\ell+1}_\nu  \Boxb  G_q  u,\Lambda^{\ell+1}_\nu G_q  u\big)_0
+ O\big(\|\Lambda_\nu^{\ell+1}G_q  u\|_{L^2(M)}(\|\Lambda_\nu^{\ell+1}\dbarb G_q  u\|_{L^2(M)}  + \|\Lambda_\nu^{\ell+1}\dbarbs G_q  u\|_{L^2(M)} )\big) \\
&= \big(\Lambda^{\ell+1}_\nu (u - H_q  u),\Lambda^{\ell+1}_\nu G_q  u\big)_0\\
&\qquad+ O\big(\|\Lambda_\nu^{\ell+1}G_q  u\|_{L^2(M)}(\|\Lambda_\nu^{\ell+1}\dbarb G_q  u\|_{L^2(M)} + \|\Lambda_\nu^{\ell+1}\dbarbs G_q  u\|_{L^2(M)})\big).
\end{align*}
Using a small constant/large constant argument and the fact that $H_q u=0$, we observe that
\[
\|\dbarb G_q u\|_{W^{\ell+1}(M)}^2 + \|\dbarbs G_q u\|_{W^{\ell+1}(M)}^2
 \leq C_{\ell+1}\big(\|G_q  u\|_{W^{\ell+1}(M)}^2 + \|u\|_{W^{\ell+1}(M)}^2 \big).
\]

For $\dbarbs\dbarb G_q u$ and $\dbarb\dbarbs G_q u$, we also use
induction and an integration by parts argument.
Since $\dbarb\dbarbs \dbarb G_q u = \dbarb(\dbarbs\dbarb G_q +
\dbarb\dbarbs G_q + H_q)u = \dbarb u$, and $u\in W^{k+2}_{0,q}(M)$,
it follows that $ [\Lambda^k_\nu,\dbarb]\dbarbs\dbarb G_q u +
\Lambda^k_\nu\dbarb\dbarbs\dbarb G_q u = \dbarb \Lambda^k _\nu\dbarbs\dbarb G_q \in
W^{k+2}_{0,q}(M)$.
For the induction, the $k=0$ case follows from \cite{HaRa11}. Assume
that the result holds
for all $\ell'$ so that $0 \leq \ell'\leq \ell \leq k-1$.
Therefore, since $\ell+1\leq k$,
\begin{align*}
& \|\Lambda^{\ell+1}_\nu \dbarbs \dbarb G_q u\|_{L^2(M)}^2\\
&= \big(\dbarb \Lambda^{\ell+1}_\nu \dbarbs \dbarb G_q u,
\Lambda^{\ell+1}_\nu  \dbarb G_q u\big)_0
+ O\big(\|\Lambda^{\ell+1}_\nu \dbarbs \dbarb G_q
u\|_{L^2(M)}\|\Lambda_\nu^{\ell+1}\dbarb G_q u\|_{L^2(M)}\big)\\
&= \big(\Lambda^{\ell+1}_\nu \dbarb u, \Lambda^{\ell+1}_\nu  \dbarb G_q u\big)_0
+ O\big(\|\Lambda^{\ell+1}_\nu \dbarbs \dbarb G_q
u\|_{L^2(M)}\|\Lambda_\nu^{\ell+1}\dbarb G_q u\|_{L^2(M)}\big) \\
&= \big(\Lambda^{\ell+1}_\nu u, \Lambda^{\ell+1}_\nu  \dbar_b^*\dbarb G_q u\big)_0
\\
&\qquad\qquad+ O\big(\|\Lambda^{\ell+1}_\nu 
u\|_{L^2(M)}\|\Lambda_\nu^{\ell+1}\dbarb G_q u\|_{L^2(M)}+\|\Lambda^{\ell+1}_\nu \dbarbs \dbarb G_q
u\|_{L^2(M)}\|\Lambda_\nu^{\ell+1}\dbarb G_q u\|_{L^2(M)}\big).
\end{align*}
Using a small constant/large constant argument and the earlier part of
the argument, we may conclude that
\[
\|\dbarbs\dbarb G_q u\|_{W^{\ell+1}(M)}^2 + \|\dbarbs G_q u\|_{W^{\ell+1}(M)}^2
 \leq C_{\ell+1}\big(\|G_q  u\|_{W^{\ell+1}(M)}^2 + \|u\|_{W^{\ell+1}(M)}^2 \big).
\]
A similar argument shows the bound for $\dbarb\dbarbs G_q u$.
\qed

\subsection{Proof of Theorem \ref{thm:regularity equivalence}}

The idea of the proof is simple: the results follow immediately
by expressing $G_q$ in terms of $S_{q-1}$, $S_q$, $S_{q+1}'$, and
weighted operators (that we know
are continuous on $W^s$) and, conversely, by expressing $S_{q-1}$,
$S_q$, $S_{q+1}'$ in terms of $G_q$ and weighted operators.

Let $s\geq 0$. From \cite{HaRa11}, we know that there exists $T_s$ so
that if $t\geq T_s$, then all of the weighted operators:
$G_{q,t}, \dbarb G_{q,t}, \dbarbs G_{q,t}, G_{q,t}\dbarb,
G_{q,t}\dbarbs, I - \dbarbst\dbarb G_{q,t}, S_{q-1,t}, S_{q+1,t}'$ are
continuous in the $W^s$-norm on their respective spaces. The continuity of $I - \dbarbst\dbarb G_{q,t}$ trivially gives
continuity of $\dbarbst\dbarb G_{q,t}$. Also, the argument in \cite[Section 6.6]{HaRa11} implies the
continuity of $\dbarb\dbarbst G_{q,t}$. Moreover, since
\[
H_{q,t} = I - \dbarbst\dbarb G_{q,t} - \dbarb\dbarbst G_{q,t},
\]
it follows that $H_{q,t}$ is continuous in $W^s_{0,q}(M)$. Finally, to
show that $S_{q+1,t}'$ is continuous in $W^s_{0,q+1}$, we note  that
$W^{s+1}_{0,q+1}(M)$ is dense in $W^s_{0,q+1}(M)$ and let $\vp\in
W^{s+1}_{0,q}(M)$. We then observe that
\begin{align*}
\norm\Lambda^s \dbarb G_{q,t}\dbarbst \vp\norm_t^2 =& \Big\la \Lambda^s \dbarbst \dbarb G_{q,t}\dbarbst \vp, \Lambda^s  G_{q,t}\dbarbst \vp \Big\ra_t \\
&+ \Big\la [\dbarbst,\Lambda^s] \dbarb G_{q,t}\dbarbst \vp, \Lambda^s  G_{q,t}\dbarbst \vp \Big\ra_t + \Big\la \Lambda^s  \dbarb G_{q,t}\dbarbst \vp, [\Lambda^s,\dbarb]  G_{q,t}\dbarbst \vp \Big\ra_t.
\end{align*}
Since $\dbarbst\vp$ is $\dbarbst$-closed, it follows that $ \dbarbst \dbarb G_{q,t}\dbarbst \vp = \dbarbst\vp$ so that
\begin{multline*}
\Big\la \Lambda^s \dbarbst \dbarb G_{q,t}\dbarbst \vp, \Lambda^s  G_{q,t}\dbarbst \vp \Big\ra_t
= \Big\la \Lambda^s \dbarbst \vp, \Lambda^s  G_{q,t}\dbarbst \vp \Big\ra_t \\
=  \Big\la \Lambda^s  \vp, \Lambda^s \dbarb G_{q,t}\dbarbst \vp \Big\ra_t +  \Big\la \Lambda^s \vp, [\dbarb,\Lambda^s]  G_{q,t}\dbarbst \vp \Big\ra_t
+  \Big\la [\Lambda^s, \dbarbst] \vp, \Lambda^s  G_{q,t}\dbarbst \vp \Big\ra_t .
\end{multline*}
It now follows that
\[
\norm\Lambda^s \dbarb G_{q,t}\dbarbst \vp\norm_t^2 \leq C_{s,t} \big( \norm\Lambda^s \vp\norm_t \norm\Lambda^s \dbarb G_{q,t}\dbarbst \vp\norm_t + \norm\Lambda^s\vp\norm_t^2\big).
\]
Using a small constant/large constant argument and absorbing terms, we have the continuity of $S_{q+1,t}'$ in $W^s_{0,q+1}(M)$.

We now express $S_{q-1}$, $S_q$, and $S_{q+1}'$ in terms of $G_q$.
For $S_q$, continuity in $W^s$ follows from the formula
\[
S_q = I - \dbarbs\dbarb G_q
\]
and Proposition \ref{prop:G_q dominates}.

Assume that $G_q$ is exactly regular. Assume that $g$ is a $\dbarb$-closed $(0,q-1)$-form.
Then following \cite[Section 5.3]{Str10} (with the zero-order pseudodifferential operator $F_t$ defined in \eqref{eq:F_t} replacing the weight), we have
\begin{align*}
(S_{q-1}f,g)_0 = (f,g)_0 = \la F_t^{-1} f,g \ra_t &= \la S_{q-1,t} F_t^{-1} f,g\ra_t \\
&= (F_t S_{q-1,t} F_t^{-1} f,g)_0 = (S_{q-1}F_t S_{q-1,t} F_t^{-1} f,g)_0.
\end{align*}
Using the fact that $S_{q-1} = I- \dbarbs G_q\dbarb$, it follows that
\begin{align}
S_{q-1} &= S_{q-1}F_t S_{q-1,t} F_t^{-1} = ( I- \dbarbs G_q\dbarb)F_t S_{q-1,t} F_t^{-1}\nn \\
&= F_t S_{q-1,t} F_t^{-1} - \dbarbs G_q[\dbarb, F_t]S_{q-1,t} F_t^{-1} \label{eqn:S_q-1 in terms of S_q-1,t}
\end{align}

For $S_{q+1}'$, we first observe that by \cite[(18)]{HaRa11},
\[
\dbarbst(I-S_{q+1,t}') = \dbarbst - \dbarbst\dbarb G_{q,t}\dbarbst
= \dbarb\dbarbst G_{q,t}\dbarbst + H_{q,t}\dbarbst =  G_{q,t}\dbarb\dbarbst\dbarbst =0.
\]
Next, observe that $S_{q+1,t}' = S_{q+1}'S_{q+1,t}'$, so we write
\begin{align}
S_{q+1}' &= S_{q+1,t}' + S_{q+1}' - S_{q+1}'S_{q+1,t}'
= S_{q+1,t}' + \dbarb G_q \dbarbs (I-S_{q+1,t}') \nn \\
&= S_{q+1,t}' + \dbarb G_q\big( \dbarbs - \dbarbst\big) (I-S_{q+1,t}'). \label{eqn:S_q+1 in terms of weighted}
\end{align}

We now express $G_q$ in terms of $S_{q-1}, S_q$, and $S_{q+1}'$.
We write
\[
G_q = G_q(\dbarb\dbarbs + \dbarbs\dbarb)G_q
= (\dbarbs G_q)^*(\dbarbs G_q) + (G_q\dbarbs)(G_q\dbarbs)^*.
\]
Also,
from \cite[(22)]{HaRa11}, we know that if $\dbarbs\phi=0$, then $\dbarbs G_q\phi=0$, so $\dbarbs(I-S_q) = \dbarbs \dbarbs\dbarb G_q =0$ means that
\begin{align*}
\dbarbs G_q &= \dbarbs G_q S_q
= \dbarbs G_q \big(\dbarb\dbarbst G_{q,t} + \dbarbst\dbarb G_{q,t} + H_{q,t}\big) S_q \\
&= (I-S_{q-1})\dbarbst G_{q,t} S_q + \dbarbs G_q \dbarbst \underbrace{\dbarb G_{q,t} S_q}_{=0} + \dbarbs G_q H_{q,t}  S_q \\
&= (I-S_{q-1})\dbarbst G_{q,t} S_q + \dbarbs G_q H_{q,t}  S_q.
\end{align*}
Note that $S_qG_q\dbarbs=0$ since $(S_q G_q\dbarbs)^* = \dbarb G_qS_q=0$. Also, $\dbarb = S_q\dbarb$ and $H_{q,t} = S_q H_{q,t}$ since $\Ran(\dbarb)\subset\ker(\dbarb)$
and $\dbarb H_{q,t}=0$, respectively. Consequently,
\begin{align*}
G_q \dbarbs  &=  (I-S_q) G_q\dbarbs \\
&= (I-S_q) \big[ G_{q,t}\dbarbst\dbarb + G_{q,t}\dbarb\dbarbst + H_{q,t} \big]G_q \dbarbs \\
&= (I-S_q) G_{q,t} \dbarbst S_{q+1}' + (I-S_q)G_{q,t}\dbarb\dbarbst G_q \dbarbs + (I-S_q)H_{q,t} G_q\dbarbs \\
&= (I-S_q) G_{q,t} \dbarbst S_{q+1}' + \underbrace{(I-S_q)S_q}_{=0}\dbarb\dbarbst G_{q,t} G_q \dbarbs
+ \underbrace{(I-S_q)S_q}_{=0} H_{q,t} G_q\dbarbs \\
&= (I-S_q) G_{q,t} \dbarbst S_{q+1}'.
\end{align*}
We also need to control $(\dbarbs G_q)^*$ and $(G_q\dbarbs)^*$. If $T_t$ is a continuous operator on $L^2(M,e^{-t|z|^2})$ then we can compute its adjoint in $L^2(M)$ as follows:
\[
(T_tf,g)_0 = \la T_tf, F_t^{-1}g\ra_t = \la f, T_t^* F_t^{-1} g\ra_t = (f, F_t T_t^* F_t^{-1}g)_0,
\]
and we observe that the adjoint of $T_t$ is $F_t T_t^* F_t^{-1}$. We therefore compute
\[
(\dbarbs G_q)^* = F_t S_q G_{q,t}\dbarb (I-S_{q-1})F_t^{-1} + F_t S_q H_{q,t}G_q\dbarb F_t^{-1}
\]
and
\[
(G_q\dbarbs)^* = F_t S_{q+1}' \dbarb (I-S_q) F_t^{-1}.
\]

We now investigate the harmonic projection $H_{q,t}$. From \cite[p.156]{HaRa11}, we know that for a $(0,q)$-form $\vp$,
\[
\|\vp\|_{W^s(M)}^2 \leq C_t\big( \|\dbarb\vp\|_{W^s(M)}^2 + \|\dbarbst\vp\|_{W^s(M)}^2 + \|\vp\|_{W^{s-1}(M)}^2\big).
\]
By density, this means that for any $f\in L^2_{0,q}(M)$,
\[
\|H_{q,t} f\|_{W^s(M)}^2 \leq C_{t,s} \|H_{q,t} f\|_{L^2(M)}^2 \leq C_{t,s} \|f\|_{L^2(M)}^2.
\]
Therefore, if $t\geq T_{s+1}$, then $H_{q,t}:L^2_{0,q}(M)\to W^{s+1}_{0,q}(M)$ and
\begin{multline*}
\| \dbarbs G_q H_{q,t}  S_q f\|_{W^s(M)} \leq C \|G_q H_{q,t}  S_q f\|_{W^{s+1}(M)} \\
\leq C \|H_{q,t}  S_q f\|_{W^{s+1}(M)} \leq C_{s,t} \|H_{q,t}  S_q f\|_{L^2(M)} \leq C_{s,t} \|f\|_{L^2(M)},
\end{multline*}
so
$\dbarbs G_q H_{q,t}  S_q: L^2_{0,q}(M) \to W^s_{0,q-1}(M)$.\qed

%
%
\section{Stein Manifolds}\label{sec:Stein}
Finally, we briefly indicate how to adapt the argument to prove our main
result in the case of a Stein manifold.  We need the following result.

\begin{thm}\label{thm:weighted theory}
Let $M$ be an $n$-dimensional Stein manifold and $\Omega\subset M$ be
a bounded subset with a smooth boundary satisfying weak $Z(q)$ for
some $1\leq q \leq n-1$.
Then there exists $\tilde t>0$ such that for all $t>\tilde t$  and $s \geq - \frac 12$ we have
  \begin{enumerate}
    \item The weighted $\dbar$-Neumann operator $N_{q,t}$ exists and
      is continuous in $W^s_{0,q}(\Omega)$.

    \item The canonical solution operators to $\dbar$ given by\\
    $\dbar^*_t N_t^q:W^s_{0,q}(\Omega)\rightarrow W^s_{0,q-1}(\Omega)$
    and $N_t^q\dbar^*_t:W^s_{0,q+1}(\Omega)\rightarrow
    W^s_{0,q}(\Omega)$ are continuous.

    \item The canonical solution operators to $\dbar^*_t$ given by\\
    $\dbar N_t^q:W^s_{0,q}(\Omega)\rightarrow W^s_{0,q+1}(\Omega)$ and
    $N_t^q\dbar:W^s_{0,q-1}(\Omega)\rightarrow W^s_{0,q}(\Omega)$ are
    continuous.

    \item For every $f\in W^s_{0,q}(\Omega)\cap\ker\dbar$ there exists
      $u\in W^s_{0,q-1}(\Omega)$ such that $\dbar u=f$.
  \end{enumerate}
\end{thm}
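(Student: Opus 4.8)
The range $-\tfrac12\le s\le 1$ is already established in \cite{HaRa12}, so the content here is to push the weighted theory to all $s\ge 1$, which is where the smoothness of $\Omega$ (rather than merely $C^3$) is used. Since $e^{-t|z|^2}$ is smooth and bounded away from $0$ and $\infty$ on $\overline\Omega$, the weighted and unweighted $W^s(\Omega)$ norms are equivalent with $t$-dependent constants, and I work with the unweighted norms. I would reduce everything to part (1). Once $N_{q,t}\colon W^s_{0,q}(\Omega)\to W^s_{0,q}(\Omega)$ is known to be continuous for $t$ large, part (4) follows because the basic estimate forces the harmonic space $\mathcal H_{q,t}$ to be trivial, so $\bigl(\dbar\dbarst+\dbarst\dbar\bigr)N_{q,t}f=f$; if $\dbar f=0$ then $\dbar\dbarst\dbar N_{q,t}f=0$, hence $\dbarst\dbar N_{q,t}f=0$ after an integration by parts, and therefore $u:=\dbarst N_{q,t}f$ satisfies $\dbar u=f$ and lies in $W^s_{0,q-1}(\Omega)$ by part (2). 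Parts (2) and (3) follow by the argument of Proposition~\ref{prop:regularity of G dbarbst}, carried out in the domain setting with the tangential operators and elliptic regularization of part (1): one commutes $\bigl(\dbar\dbarst+\dbarst\dbar\bigr)N_{q,t}f=f$ with $\Lambda^s$, pairs against $\Lambda^s N_{q,t}f$ and, after using $\dbar$- or $\dbarst$-closedness to recover a derivative, against $\Lambda^s f$, and absorbs by a small constant/large constant argument, the $t$-dependent lower-order commutators being handled by the inductive hypothesis.

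\textbf{Proof of part (1).} I would induct on $s$, the base case $s\in[-\tfrac12,1]$ being \cite{HaRa12} and each step advancing by one from this base interval, so that every real $s\ge-\tfrac12$ is reached. The engine is the weighted Kohn--Morrey--H\"ormander identity with the matrix $\Upsilon$ displayed in Section~\ref{subsec:weak z(q) and basic estimate}, which for $t\ge\tilde t$ gives, for $\vp\in\Dom(\dbar)\cap\Dom(\dbarst)$ smooth up to $\bd\Omega$, a basic estimate of the form
\[
\|\vp\|_{L^2(\Omega)}^2\le \ep\bigl(\|\dbar\vp\|_{L^2(\Omega)}^2+\|\dbarst\vp\|_{L^2(\Omega)}^2\bigr)+C_t\|\vp\|_{W^{-1}(\Omega)}^2 ,
\]
with the coefficient of the first two terms independent of $t$. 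Applying this to $\vp=\Lambda^s_{\mathrm{tan},\nu}N_{q,t}f$ in each special boundary chart $U_\nu$, where $\Lambda^s_{\mathrm{tan},\nu}$ is a tangential Sobolev operator so that $\vp$ stays in $\Dom(\dbarst)$, and commuting $\dbar$ and $\dbarst$ past $\Lambda^s_{\mathrm{tan},\nu}$ produces commutators of order $\le s$ (absorbed by the left side) together with $t$-dependent terms of order $\le s-1$ (handled by the inductive hypothesis); the interior is controlled by the elliptic estimate for $\dbar\dbarst+\dbarst\dbar$ away from $\bd\Omega$, and normal derivatives are recovered from tangential ones via the non-characteristic structure of the $\dbar$-Neumann boundary conditions. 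To make the absorption rigorous I would run the computation first on the elliptically regularized problem in the style of the Kohn--Nirenberg elliptic regularization, whose solutions are smooth up to $\bd\Omega$, obtain estimates uniform in the regularization parameter, and pass to the limit. This yields $\|N_{q,t}f\|_{W^s(\Omega)}\le C_{s,t}\|f\|_{W^s(\Omega)}$ for $f$ smooth up to $\bd\Omega$, hence for all $f\in W^s_{0,q}(\Omega)$ by density.

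\textbf{Main obstacle.} The essential difficulty, and the point at which the domain case genuinely differs from the boundaryless situation of Proposition~\ref{prop:regularity of G dbarbst}, is the boundary regularity in part (1): $\Lambda^s N_{q,t}f$ need not lie in $\Dom(\dbarst)$, so the commute-and-absorb scheme must be run with tangential operators in special boundary frames together with elliptic regularization, and passing to the limit requires error constants independent of $t$ so that the large-$t$ absorption persists. This is exactly where $C^\infty$ regularity of $\bd\Omega$ enters: iterating the estimate up to order $s$ requires the defining function, the boundary frames, the metric, and the coefficients of the basic identity --- in particular $\Upsilon$ and its derivatives through order $\lceil s\rceil+1$ --- to be smooth, so one must re-derive the \cite{HaRa12} basic identity in this higher-regularity range while keeping track of which error constants depend on $t$. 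Given the explicit identity recalled in Section~\ref{subsec:weak z(q) and basic estimate}, this is routine but constitutes the bulk of the work.
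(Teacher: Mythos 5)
Your proposal is correct and follows essentially the same route as the paper, which in fact offers no written proof beyond citing \cite{HaRa12} for $-\tfrac12\le s\le 1$ and asserting that ``standard techniques'' (tangential derivatives controlling Sobolev norms in special boundary charts, i.e.\ replacing $\Lambda^k$ by $D_{T^\alpha}$, plus elliptic regularization) extend the arguments to all $s\ge-\tfrac12$. Your induction on $s$ with the weighted basic estimate, large-$t$ absorption, and recovery of normal derivatives is precisely the expansion of those standard techniques that the paper leaves to the reader.
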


In \cite{HaRa12}, Harrington and Raich proved  Theorem
\ref{thm:weighted theory} for $-\frac 12 \leq s \leq 1$. Standard techniques show
 that their arguments extend seamlessly to all $s\geq -\frac 12$.\medskip

The proofs of the results in Section \ref{subsec:Stein mfld results}
are now straightforward, given the proofs of Section \ref{subsec:CR
  mfld results} and
\cite[Section 5.3]{Str10}. The general outline of the argument is
contained in \cite[Section 5.3]{Str10}. Our hypotheses allow us to
prove closed range and Kohn's weighted theory for a fixed $q$,
$1 \leq q \leq n-1$. Using the arguments from the proofs of the
results in Section \ref{subsec:CR mfld results} with the weighted
theory from Theorem \ref{thm:weighted theory}, $L^2$ theory from
\cite{HaRa12}, and the recognition that the tangential derivatives
control the Sobolev norms (so we can replace the $\Lambda^k$ terms
with $D_{T^\alpha}$), we can repeat the arguments above to
prove the results in Section \ref{subsec:Stein mfld results}.

\bibliographystyle{alpha}
\bibliography{mybib3-26-13}

\end{document}